\newcommand{\A}{\mathcal{A}}
\DeclareMathOperator{\RE}{Re}
\begin{document}


\title[Differential Subordination
for Analytic Functions  with Fixed Initial Coefficient]{
Second-Order Differential Subordination  for Analytic \\Functions
with Fixed Initial Coefficient }

\author[R. M. Ali, S. Nagpal and V. Ravichandran]{\first Rosihan M. Ali,
\second Sumit Nagpal and \third V. Ravichandran}

\address{$^{1,3}$ School of Mathematical Sciences,
Universiti Sains Malaysia, 11800 USM Penang, Malaysia
\\  $^{2,3}$ Department of Mathematics, University of Delhi,
Delhi 110 007, India\\}

\email{\first rosihan@cs.usm.my, \second
sumitnagpal.du@gmail.com,\third vravi68@gmail.com}

\begin{abstract}
Functions with fixed initial coefficient  have been widely studied.
A new methodology is proposed in this paper by making appropriate
modifications and improvements to the theory of second-order
differential subordination. Several interesting examples are given.
The results obtained are applied to the classes of convex and
starlike functions with fixed second coefficient.
\end{abstract}

\keywords{Differential subordination, extended Schwarz lemma, fixed
initial coefficient, convex functions, starlike functions.}

\subjclass{30C45, 30C80}


\maketitle

\section{Introduction}\label{ch3,sec1} \noindent
Let $\mathcal{H}$ denote the class of analytic functions in the unit
disk $\mathbb{D}:=\{z\in\mathbb{C}:|z|<1\}$. For a fixed positive
integer $n$, let $\mathcal{H}[a,n]$ be its subset consisting of
functions $p$ of the form \[p(z)=a+p_n z^{n}+p_{n+1}z^{n+1}+\cdots.
\] The familiar subclass $\mathcal{S}$ of $\mathcal{H}[0,1]$
consists of normalized univalent functions
$f(z)=z+a_{2}z^{2}+a_{3}z^{3}+\cdots$ in $\mathbb{D}$. It is a
remarkable fact that the second coefficient plays an important role
in univalent function theory; indeed it influences growth and
distortion estimates \cite{duren} for functions in the class
$\mathcal{S}$. For this reason, there is a continued interest in the
investigations of how the second coefficient shaped the geometric
properties of important subclasses of functions. Works in this
direction include those of
\cite{rma,rma2,finkelstein,gronwall,ksp2,tepper}.

This paper studies further analytic functions with fixed initial
coefficient. The methodology used here will be differential
subordination, applied from making appropriate modifications and
improvements to its existing theory. Many of the significant works
on differential subordination have been pioneered by Miller and
Mocanu \cite{millermocanu1978,millermocanu1981}, and their monograph
\cite{monograph} compiled their great efforts over more than two
decades. In recent years, various authors have successfully applied
the theory of differential subordination to address many important
problems in the field. These works include those of
\cite{ros1,ros2,ros3,ros4,ros5,millerthird,cho,wang}.

To state the central theme of Miller and Mocanu's theory on
differential subordination, let $p$ be an analytic function in
$\mathbb{D}$ and $\psi(r,s,t)$ be a complex function defined in a
domain of $\mathbb{C}^{3}$. Consider a class of functions $\Psi$,
and two subsets $\Omega$ and $\Delta$ in $\mathbb{C}$. Given any two
quantities, the aim of the theory of second-order differential
subordination is to determine the third so that the following
differential implication is satisfied: \begin{equation*}\label{subo}
\psi \in \Psi \quad \mbox{and} \quad \{\psi(p(z),z
p'(z),z^{2}p''(z)): z \in \mathbb{D}\} \subset \Omega \quad
\Rightarrow \quad p(\mathbb{D}) \subset \Delta. \end{equation*}

Let $\mathcal{H}_{\beta}[a,n]$ consist of analytic functions $p$ of
the form
\[p(z)=a+p_n z^{n}+p_{n+1}z^{n+1}+\cdots,\]
where the coefficient $p_{n}$ is a fixed constant $\beta$ in
$\mathbb{C}$. Without loss of generality, $\beta$ is assumed to be a
nonnegative real number.

Let $\psi:\mathbb{C}^{3}\rightarrow \mathbb{C}$ be analytic in a
domain $D$ and let $h$ be univalent in $\mathbb{D}$. Suppose $p\in
\mathcal{H}_{\beta}[a,n]$, $(p(z),z p'(z),z^{2}p''(z)) \in D$ when
$z \in \mathbb{D}$, and $p$ satisfies the second-order differential
subordination
\begin{equation}\label{ch3,eq2.4}
\psi(p(z),zp'(z),z^{2}p''(z)) \prec h(z).
\end{equation}
Then $p$ is called a $\beta$-\emph{solution} of the differential
subordination. The univalent function $q$ is said to be a
$\beta$-\emph{dominant} of the differential subordination
\eqref{ch3,eq2.4} if $p \prec q$ for all $p$ satisfying
\eqref{ch3,eq2.4}. If $\widetilde{q}$ is a $\beta$-dominant of
\eqref{ch3,eq2.4} and $\widetilde{q} \prec q$ for all
$\beta$-dominants $q$ of \eqref{ch3,eq2.4}, then $\widetilde{q}$ is
called the $\beta$-\emph{best dominant} of \eqref{ch3,eq2.4}. The
class of functions $\Psi_\beta$ such that \eqref{ch3,eq2.4} is
satisfied for $\psi\in \Psi_\beta$ is called the class of
\emph{$\beta$-admissible functions}.

Section \ref{ch3,sec3} of this paper deals with the basic lemmas for
functions with fixed initial coefficient. In Section \ref{ch3,sec4}
a suitable class $\Psi_{n,\beta}(\Omega,q)$ of $\beta$-admissible
functions is defined and theorems analogous to those of Miller and
Mocanu \cite{monograph} are obtained. These results are applied to
two important particular cases corresponding to $q(\mathbb{D})$
being a disk or a half-plane. Examples of differential inequalities
and subordinations are presented in Section \ref{ch3,sec5}. The
paper concludes with interesting applications of the newly
formulated theory to the classes of normalized convex and starlike
univalent functions with fixed second
coefficient.

The following extended version of Schwarz Lemma (see
\cite{finkelstein}) is required in our investigations.

\begin{lemma}[Extended  Schwarz Lemma]\label{efsl} Let
$w(z)=a_{1}z+\cdots$ be an analytic map of the unit disk
$\mathbb{D}$ into itself. Then $|a_{1}| \leq 1$, and \[|w(z)| \leq
\frac{|z|(|z|+|a_{1}|)}{1+|a_{1}| |z|}.\] Equality holds at some
point $z \neq 0$ if and only if
\[w(z)=\frac{e^{-it}z(z+a_{1}e^{it})}{1+\bar{a}_{1}e^{-it}z},\quad t
\geq 0.\]
\end{lemma}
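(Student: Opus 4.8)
The plan is to reduce everything to the classical Schwarz and Schwarz--Pick lemmas. First I would note that $w(0)=0$, so applying the classical Schwarz lemma to $w$ gives at once $|a_1|=|w'(0)|\le 1$. Next I would introduce the auxiliary function $g(z):=w(z)/z$, which extends analytically across the origin with $g(0)=a_1$ and $|g(z)|\le 1$ on $\mathbb{D}$ (again by Schwarz, since $|w(z)|\le|z|$). If $g$ is constant, it must equal $a_1$ and the asserted estimate is immediate to verify; otherwise the maximum modulus principle forces $|g(z)|<1$ throughout $\mathbb{D}$, so $g$ maps $\mathbb{D}$ into itself, and the Schwarz--Pick form of Schwarz's lemma (Schwarz applied to the Blaschke conjugate of $g$) yields
\[
\left|\frac{g(z)-a_1}{1-\overline{a_1}\,g(z)}\right|\le |z|,\qquad z\in\mathbb{D}.
\]

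I would then solve this relation for $g(z)$: writing $u$ for the Blaschke quotient above, one has $|u|\le|z|$ and $g(z)=\dfrac{u+a_1}{1+\overline{a_1}u}$. The core of the estimate is the elementary inequality $\left|\dfrac{u+a_1}{1+\overline{a_1}u}\right|\le\dfrac{|u|+|a_1|}{1+|a_1||u|}$, which I would prove by fixing $|u|$ and viewing $\left|\dfrac{u+a_1}{1+\overline{a_1}u}\right|^2$ as a function of $\RE(\overline{a_1}u)$; its derivative carries the factor $(1-|u|^2)(1-|a_1|^2)\ge 0$, so the maximum occurs when $\overline{a_1}u=|a_1||u|$. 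Combined with the fact that $s\mapsto(s+|a_1|)/(1+|a_1|s)$ is nondecreasing on $[0,1]$ (derivative $(1-|a_1|^2)/(1+|a_1|s)^2\ge 0$), this gives $|g(z)|\le(|z|+|a_1|)/(1+|a_1||z|)$, and multiplying through by $|z|$ (noting $|w(z)|=|z|\,|g(z)|$) produces the stated bound on $|w(z)|$.

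For the equality statement I would trace tightness backward through the chain. Equality at some $z_0\ne 0$ forces equality in both monotonicity steps and in the Schwarz--Pick inequality at $z_0$; the latter forces the Blaschke quotient of $g$ to be a rotation, i.e. $\dfrac{g(z)-a_1}{1-\overline{a_1}g(z)}=\lambda z$ with $|\lambda|=1$, hence $g(z)=\dfrac{a_1+\lambda z}{1+\overline{a_1}\lambda z}$ and $w(z)=z\,g(z)=\dfrac{z(a_1+\lambda z)}{1+\overline{a_1}\lambda z}$. Factoring $\lambda=e^{-it}$ ($t\ge 0$) out of the numerator rearranges this into the displayed normal form; conversely, if $w$ has that form, choosing $z_0\ne 0$ with $\overline{a_1}e^{-it}z_0>0$ (any $z_0\ne0$ if $a_1=0$) makes every intermediate inequality an equality, so equality holds at $z_0$. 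I expect the main nuisance to lie precisely here: bookkeeping which inequalities are tight, disposing cleanly of the degenerate cases $a_1=0$, $|a_1|=1$, and $g$ constant, and checking that the Möbius/rotation form genuinely collapses to the displayed expression (in particular that it reduces to $a_1z$ when $|a_1|=1$).
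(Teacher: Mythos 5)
Your argument is correct, but there is nothing in the paper to compare it with: the paper does not prove this lemma, it simply quotes it as a known result with a citation to Finkelstein, so the comparison question is moot and your proposal should be judged on its own. On its own it is sound and is essentially the standard derivation of this refinement of the Schwarz lemma: Schwarz at the origin gives $|a_1|\le 1$ and lets you form $g(z)=w(z)/z$; Schwarz--Pick applied to $g$ gives $\left|\frac{g(z)-a_1}{1-\overline{a_1}\,g(z)}\right|\le |z|$; and your two elementary estimates, namely $\left|\frac{u+a_1}{1+\overline{a_1}u}\right|\le\frac{|u|+|a_1|}{1+|a_1||u|}$ (the derivative computation with the factor $(1-|u|^2)(1-|a_1|^2)$ is right) and the monotonicity of $s\mapsto(s+|a_1|)/(1+|a_1|s)$, yield the bound after multiplying by $|z|$. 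The equality analysis is also correct: equality at $z_0\ne 0$ forces equality in Schwarz--Pick, hence $g(z)=\frac{a_1+\lambda z}{1+\overline{a_1}\lambda z}$ with $|\lambda|=1$, and writing $\lambda=e^{-it}$ gives exactly the displayed form; conversely a point with $\overline{a_1}e^{-it}z_0>0$ (any $z_0\ne 0$ when $a_1=0$) attains equality. The loose ends you flag are genuinely minor and close easily: if $|a_1|=1$ the maximum principle forces $g\equiv a_1$, and $w(z)=a_1z$ both attains equality everywhere and agrees with the displayed form for every $t$; if $g$ is the constant $a_1$ with $|a_1|<1$, equality at a nonzero point is impossible, consistent with the ``only if'' direction; and the extremal functions, being $z$ times a disk automorphism (or $e^{-it}z^2$ when $a_1=0$), do map $\mathbb{D}$ into itself, so the converse direction is legitimate.
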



\section{Fundamental lemmas for functions with
fixed initial coefficient}\label{ch3,sec3} \noindent Let
$\mathbb{D}_r:=\{z\in\mathbb{C}:|z|<r\}$. In this section, we will
prove several basic  lemmas  for functions with fixed initial
coefficient.

\begin{lemma}\label{ch3,lem3.1} Let
$z_{0}=r_{0}e^{i\theta_{0}}$, $(r_{0}<1)$, and
$g(z)=g_{n}z^{n}+g_{n+1}z^{n+1}+\cdots$ be continuous in
$\overline{\mathbb{D}}_{r_0}$, analytic in
$\mathbb{D}_{r_0}\cup\{z_0\}$ with $g(z)\not\equiv 0$, and $n \geq
1$. If
\[|g(z_{0})|=\max_{|z|\leq r_{0}} |g(z)|\]
then
\[\frac{z_{0}g'(z_{0})}{g(z_{0})}=m\quad \text{and}\quad
\RE \left(\frac{z_{0}g''(z_{0})}{g'(z_{0})}+1\right) \geq m,\] where
\begin{equation}\label{ch3,eq3.1}
m \geq
n+\frac{|g(z_{0})|-|g_{n}|r_{0}^{n}}{|g(z_{0})|+|g_{n}|r_{0}^{n}}.
\end{equation}
\end{lemma}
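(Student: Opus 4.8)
The plan is to reduce the three assertions to statements about a self-map of the unit disk, then to deduce the identity $z_0g'(z_0)/g(z_0)=m$, the reality of $m$, and the inequality $\RE\bigl(1+z_0g''(z_0)/g'(z_0)\bigr)\ge m$ from the classical Jack--Miller--Mocanu lemma, leaving only the sharpened bound \eqref{ch3,eq3.1} on $m$ to be obtained from the Extended Schwarz Lemma (Lemma~\ref{efsl}).

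First I would normalize. Since $g\not\equiv 0$ vanishes at the origin it is non-constant, so by the maximum modulus principle $|g|$ attains its maximum over $\overline{\mathbb{D}}_{r_0}$ on the circle $|z|=r_0$; thus $|z_0|=r_0$. Put $\zeta_0:=z_0/r_0\in\partial\mathbb{D}$ and $w(z):=g(r_0z)/g(z_0)$. Then $w$ is continuous on $\overline{\mathbb{D}}$, analytic on $\mathbb{D}\cup\{\zeta_0\}$, satisfies $|w|\le 1$ on $\overline{\mathbb{D}}$ and $w(\zeta_0)=1$, and $w(z)=c\,z^{n}+\cdots$ with $|c|=a$, where
\[
a:=\frac{|g_n|\,r_0^{n}}{|g(z_0)|}\le 1
\]
by Cauchy's coefficient estimate. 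A direct computation gives $\zeta_0w'(\zeta_0)/w(\zeta_0)=z_0g'(z_0)/g(z_0)$ and $1+\zeta_0w''(\zeta_0)/w'(\zeta_0)=1+z_0g''(z_0)/g'(z_0)$, so it suffices to prove the assertions for $w$ at $\zeta_0$, with $m:=\zeta_0w'(\zeta_0)/w(\zeta_0)$. Applying the lemma of Miller and Mocanu \cite{monograph} to $w$ shows $m$ is real with $m\ge n$ and $\RE\bigl(1+\zeta_0w''(\zeta_0)/w'(\zeta_0)\bigr)\ge m$; hence only \eqref{ch3,eq3.1} remains.

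To prove \eqref{ch3,eq3.1}, set $V(z):=w(z)/z^{n-1}$. Since $w$ vanishes to order at least $n$ at the origin, $V$ is analytic on $\mathbb{D}$, continuous on $\overline{\mathbb{D}}$, with $V(0)=0$, $|V'(0)|=a$, and $|V|\le 1$ on $\partial\mathbb{D}$ hence on $\overline{\mathbb{D}}$; thus $V$ maps $\mathbb{D}$ into itself. The Extended Schwarz Lemma applied to $V$ yields
\[
|w(z)|=|z|^{n-1}|V(z)|\le\frac{|z|^{n}\bigl(|z|+a\bigr)}{1+a|z|}=:\Phi(|z|),\qquad z\in\overline{\mathbb{D}}.
\]
Now $\Phi(1)=1=|w(\zeta_0)|$ while $|w(r\zeta_0)|\le\Phi(r)$ for $r\in[0,1]$, so $r\mapsto\Phi(r)-|w(r\zeta_0)|$ is nonnegative on $[0,1]$ and vanishes at $r=1$, whence its left-hand derivative there is nonpositive; since $w(\zeta_0)=1\neq 0$ and $m$ is real, $\frac{d}{dr}|w(r\zeta_0)|\big|_{r=1}=\RE\bigl(\zeta_0w'(\zeta_0)/w(\zeta_0)\bigr)=m$, and therefore
\[
m\ \ge\ \Phi'(1)\ =\ n+\frac{1-a}{1+a}\ =\ n+\frac{|g(z_0)|-|g_n|r_0^{n}}{|g(z_0)|+|g_n|r_0^{n}},
\]
which is \eqref{ch3,eq3.1}. (If $a=1$ this bound reads $m\ge n$, and in fact $V$ is a rotation, forcing $g(z)=g_nz^{n}$ and equality throughout; the case $a=0$, i.e.\ $g_n=0$, is covered by the same computation.)

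Should one wish to avoid the external reference, the two ``classical'' assertions can be recovered directly. That $m$ is real and positive follows by examining $|w|$ near $\zeta_0$: differentiating $|w(e^{i\sigma}\zeta_0)|^{2}\le 1=|w(\zeta_0)|^{2}$ at $\sigma=0$ gives $\IM m=0$, the radial maximum of $|w(r\zeta_0)|$ at $r=1$ gives $\RE m\ge 0$, and $m\ge n\ge 1$ then gives $m>0$. For the second-order inequality, let $P:=(1+w)/(1-w)$; this is a Carath\'eodory function ($\RE P>0$ on $\mathbb{D}$, $P(0)=1$) with a simple pole at $\zeta_0$, and expanding $w$ about $\zeta_0$ shows $P_1(z):=P(z)-m^{-1}(\zeta_0+z)/(\zeta_0-z)$ extends analytically to $\zeta_0$ with $\RE\bigl(1+\zeta_0w''(\zeta_0)/w'(\zeta_0)\bigr)=m+m\,\RE P_1(\zeta_0)$; the inequality thus reduces to $\RE P_1(\zeta_0)\ge 0$. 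The latter is precisely where the main difficulty lies: it is Julia's inequality $\RE P(z)\ge m^{-1}\RE\bigl((\zeta_0+z)/(\zeta_0-z)\bigr)$ on $\mathbb{D}$ (equivalently, the Herglotz measure of $P$ has an atom of mass $m^{-1}$ at $\zeta_0$, and $P_1$ is the Poisson transform of the remaining nonnegative mass). Everything else is routine; the one genuinely new ingredient over the classical lemma is the replacement of the plain Schwarz bound $|w(z)|\le|z|^{n}$ by $|w(z)|\le\Phi(|z|)$, the fixed second coefficient entering only through $|V'(0)|=a$.
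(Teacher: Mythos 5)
Your proposal is correct and follows essentially the same route as the paper: the first two assertions are quoted from Miller--Mocanu's Lemma 2.2a, and the bound \eqref{ch3,eq3.1} comes from applying the Extended Schwarz Lemma to $g(z_0 z)/\bigl(g(z_0)z^{\,n-1}\bigr)$ (your $V$, up to rotating the boundary point to $1$) and comparing the radial growth at the boundary maximum. Your one-sided derivative comparison of $\Phi(r)-|w(r\zeta_0)|$ at $r=1$ is just a repackaging of the paper's difference-quotient limit $m=\lim_{r\to1}\bigl(1-\RE\,g(z_0 r)/g(z_0)\bigr)/(1-r)$, so the two arguments coincide in substance.
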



\begin{proof}The first two assertions follow from Lemma 2.2a in
\cite[p.\ 19]{monograph}. It remains to prove \eqref{ch3,eq3.1}. Let
$h:\overline{\mathbb{D}} \rightarrow \mathbb{C}$  be defined by
\begin{align*}
    h(z)&=\frac{g(z_{0}z)}{g(z_{0})z^{n-1}} = s_n z+ \cdots,
    \quad \text{where}\quad   s_{n}=\frac{g_{n}z_{0}^{n}}{g(z_{0})}.
\end{align*}
 Then  $h$ is
continuous in $\overline{\mathbb{D}}$ and analytic in
$\mathbb{D}\cup \{1\}$, and the maximum principle readily gives
\begin{align*}
    |h(z)|\leq \max_{|z|=1} |h(z)|
          =\frac{1}{|g(z_{0})|} \max_{|z|= 1} |g(z_{0}z)|
           = 1.
\end{align*}
Since $h(0)=0$, the extended Schwarz Lemma (Lemma \ref{efsl}) yields
$|s_{n}| \leq 1$, and
\begin{equation*}\label{ch3,eq3.2}
\left|\frac{g(z_{0}z)}{g(z_{0})z^{n-1}}\right| = |h(z)| \leq
\frac{|z|(|z|+|s_{n}|)}{1+|s_{n}||z|}.
\end{equation*}
In particular, at the point $z=r$, $0 \leq r <1$,
\begin{equation}\label{ch3,eq3.3}
 \RE \frac{g(z_{0}r)}{g(z_{0})} \leq
  \left|\frac{g(z_{0}r)}{g(z_{0})}\right| \leq
  \frac{r^{n}(r+|s_{n}|)}{1+|s_{n}|r}.
\end{equation}
Since $m=z_{0}g'(z_{0})/g(z_{0})$,
\begin{align*}
    m =\left.\frac{d}{dr}\left( \frac{g(z_{0}r)}{g(z_{0})}\right)
    \right|_{r=1}
      =\lim_{r\rightarrow
      1}\frac{g(z_{0}r)-g(z_{0})}{(r-1)z_{0}}\frac{z_{0}}{g(z_{0})}
      =\lim_{r\rightarrow 1}
     \left(1-\frac{g(z_{0}r)}{g(z_{0})}\right)\frac{1}{1-r}.
\end{align*}
Taking real parts and using \eqref{ch3,eq3.3}, it follows that
\begin{align*}
    m&=\lim_{r\rightarrow 1} \left(1-\RE
    \frac{g(z_{0}r)}{g(z_{0})}\right)\frac{1}{1-r}\\
     &\geq \lim_{r\rightarrow 1}
     \left(1-\frac{r^{n}(r+|s_{n}|)}{1+|s_{n}|r}\right)\frac{1}{1-r}\\
     &=n+\frac{1-|s_{n}|}{1+|s_{n}|}\\
     &=n+\frac{|g(z_{0})|-|g_{n}|r_{0}^{n}}{|g(z_{0})|+|g_{n}|r_{0}^{n}}.\qedhere
     \end{align*}
\end{proof}

To state the second technical lemma, let $Q$ be the class of
functions $q$ that are analytic and injective in
$\overline{\mathbb{D}}\setminus E(q)$, where \[ E(q) := \left\{
\zeta \in
\partial \mathbb{D}: \lim_{z \rightarrow \zeta}
q(z)=\infty\right\}\] and are such that $q'(\zeta) \neq 0 $ for
$\zeta\in
\partial \mathbb{D}\setminus E(q)$.

\begin{lemma}\label{ch3,lem3.2}
Let $q \in Q$ with $q(0)=a$, and $p \in \mathcal{H}_{\beta}[a,n]$
with $p(z)\not\equiv a$. If there exists a point $z_{0} \in
\mathbb{D}$ such that $p(z_{0}) \in q(\partial \mathbb{D})$ and
$p(\{z:|z|<|z_{0}|\})\subset q(\mathbb{D})$, then
\begin{equation}\label{ch3,eq3.4}
z_{0}p'(z_{0})=m\zeta_{0} q'(\zeta_{0})
\end{equation}
and
\begin{equation}\label{ch3,eq3.5}
\RE \left(1+\frac{z_{0}p''(z_{0})}{p'(z_{0})}\right) \geq m\RE
\left(1+\frac{\zeta_{0}q''(\zeta_{0})}{q'(\zeta_{0})}\right),
\end{equation}
where $q^{-1}(p(z_{0}))=\zeta_{0}=e^{i\theta_{0}}$ and
\begin{equation}\label{ch3,eq3.6}
m \geq n+\frac{|q'(0)|-\beta |z_{0}|^{n}}{|q'(0)|+\beta
|z_{0}|^{n}}.
\end{equation}
\end{lemma}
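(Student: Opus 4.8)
The plan is to reduce everything to Lemma~\ref{ch3,lem3.1}. Note first that the relations \eqref{ch3,eq3.4} and \eqref{ch3,eq3.5} are, apart from the value of $m$, precisely the conclusion of Miller and Mocanu's Lemma~2.2d in \cite{monograph}, so they may be quoted with the weaker estimate $m\geq n$; the genuinely new ingredient is the sharpened bound \eqref{ch3,eq3.6}. (Alternatively, as indicated below, \eqref{ch3,eq3.4} and \eqref{ch3,eq3.5} can also be recovered directly from the construction, so the whole statement follows from Lemma~\ref{ch3,lem3.1}.)

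Write $r_{0}:=|z_{0}|$. Since $q$ is injective and analytic on $\mathbb{D}$, and $q^{-1}$ is analytic in a neighbourhood of $p(z_{0})=q(\zeta_{0})$ (here $\zeta_{0}\notin E(q)$ because $p(z_{0})$ is finite, and $q'(\zeta_{0})\neq 0$ since $q\in Q$), set
\[
 g(z):=q^{-1}\bigl(p(z)\bigr).
\]
By hypothesis $p(\mathbb{D}_{r_{0}})\subset q(\mathbb{D})$, so $g$ is analytic on $\mathbb{D}_{r_{0}}$, and since $p$ is analytic at $z_{0}$ with $p(z_{0})=q(\zeta_{0})$, the function $g$ extends analytically to $z_{0}$ with $g(z_{0})=\zeta_{0}$. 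From $q(0)=a=p(0)$ and $p(z)=a+\beta z^{n}+\cdots$ one computes $g(z)=g_{n}z^{n}+\cdots$ with $g_{n}=\beta/q'(0)$, and $g\not\equiv 0$ (otherwise $p=q\circ g\equiv a$). Finally $|g|<1$ on $\mathbb{D}_{r_{0}}$ while $|g(z_{0})|=|\zeta_{0}|=1$, and — using that for $q\in Q$ the closure $\overline{q(\mathbb{D})}$ is exhausted by $q(\overline{\mathbb{D}}\setminus E(q))$ together with the point at infinity — $g$ is continuous on $\overline{\mathbb{D}}_{r_{0}}$ with $|g|\leq 1$ there, so that $|g(z_{0})|=\max_{|z|\leq r_{0}}|g(z)|$.

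Lemma~\ref{ch3,lem3.1} now applies to $g$ and yields $z_{0}g'(z_{0})/g(z_{0})=m$ and $\RE\bigl(1+z_{0}g''(z_{0})/g'(z_{0})\bigr)\geq m$ with
\[
 m\;\geq\; n+\frac{|g(z_{0})|-|g_{n}|r_{0}^{\,n}}{|g(z_{0})|+|g_{n}|r_{0}^{\,n}}
 \;=\; n+\frac{|q'(0)|-\beta|z_{0}|^{n}}{|q'(0)|+\beta|z_{0}|^{n}},
\]
which is exactly \eqref{ch3,eq3.6}. Differentiating $p=q\circ g$ once and twice (using $z_{0}g'(z_{0})=m\zeta_{0}$) shows that this $m$ is the same as the one in \eqref{ch3,eq3.4}, and that taking real parts in $1+z_{0}p''(z_{0})/p'(z_{0})=m\,\zeta_{0}q''(\zeta_{0})/q'(\zeta_{0})+\bigl(1+z_{0}g''(z_{0})/g'(z_{0})\bigr)$ reproduces \eqref{ch3,eq3.5}; so everything is consistent with (indeed follows from) the estimate for $g$.

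The step I expect to be the main obstacle is the last claim of the second paragraph: that $g=q^{-1}\circ p$ is continuous on the \emph{full} closed disk $\overline{\mathbb{D}}_{r_{0}}$ with $|g|\leq 1$ on $|z|=r_{0}$ — equivalently, that $p$ does not leave $q(\overline{\mathbb{D}}\setminus E(q))$ along that circle — since this is precisely what legitimises invoking Lemma~\ref{ch3,lem3.1}. For $q$ analytic on $\overline{\mathbb{D}}$ it is immediate by compactness, but for a general $q\in Q$ it rests on the accessibility of $\partial q(\mathbb{D})$ built into the class $Q$, or, as in \cite{monograph}, on approximating $q$ by functions analytic on $\overline{\mathbb{D}}$ and passing to the limit in the contact point $z_{0}$.
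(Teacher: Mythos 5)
Your proposal is correct and follows essentially the same route as the paper: quote Miller--Mocanu's Lemma 2.2d for \eqref{ch3,eq3.4} and \eqref{ch3,eq3.5}, set $g=q^{-1}\circ p$ with $g_n=\beta/q'(0)$ and $|g(z_0)|=1$, and apply Lemma~\ref{ch3,lem3.1} to obtain the sharpened bound \eqref{ch3,eq3.6}. Your extra care about the continuity of $g$ on $\overline{\mathbb{D}}_{|z_0|}$ and the chain-rule verification that the two values of $m$ coincide only makes explicit details that the paper (following \cite{monograph}) leaves implicit.
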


\begin{proof}Except for \eqref{ch3,eq3.6}, the assertions here
follow from Lemma 2.2d in \cite[p.\ 24]{monograph}. Thus, we shall
only prove \eqref{ch3,eq3.6}. Let  $g$ be defined  by
\[g(z)=q^{-1}(p(z)),\quad |z| \leq |z_{0}|.\]
Then $g$ is analytic in $\{z:|z| \leq |z_{0}|\}$ and satisfies
$|g(z_{0})|=1$, $g(0)=0$, $|g(z)| \leq 1$ for $|z| \leq |z_{0}|$ and
\[g^{(k)}(0)=p^{(k)}(0)=0 \quad \text{for} \quad k=1,2,\cdots, n-1.\]
If $g(z)=g_{n}z^{n}+g_{n+1}z^{n+1}+\cdots$, then the relation
$q(g(z))=p(z)$ gives $g_{n}=\beta/q'(0)$. Lemma \ref{ch3,lem3.1} and
Lemma 2.2d in \cite[p.\ 24]{monograph} yield an $m$ satisfying
\eqref{ch3,eq3.4} and \eqref{ch3,eq3.5} where
\[m \geq n+\frac{|g(z_{0})|-|g_{n}||z_{0}|^{n}}{|g(z_{0})|+|g_{n}||z_{0}|^{n}}.\]
Since $|g(z_{0})|=1$, it follows that
\[m \geq n+\frac{1-|g_{n}||z_{0}|^{n}}{1+|g_{n}||z_{0}|^{n}}=
n+\frac{|q'(0)|-\beta |z_{0}|^{n}}{|q'(0)|+\beta |z_{0}|^{n}}.\qedhere\]
\end{proof}

%

Following Miller and Mocanu \cite{monograph}, two important
functions of $q\in Q$ will be considered, namely, when
$q(\mathbb{D})$ is a disk or a half-plane.\\

\noindent \textbf{Case 1.} The set $q(\mathbb{D})$ is the disk
$\Delta=\mathbb{D}_{M}=\{w:|w|<M\}$. Here the function
\[q(z)=M\frac{Mz+a}{M+\overline{a}z}\quad (z \in \mathbb{D}),\]
with $M>0$ and $|a|<M$, is univalent in $\overline{\mathbb{D}}$ and
satisfies $q(\mathbb{D})=\Delta$, $q(0)=a$ and $q \in Q$.\\

\noindent \textbf{Case 2.} The set $q(\mathbb{D})$ is the half-plane
$\Delta=\{w:\RE w>\alpha\}$. Then
\[q(z)=\frac{a-(2\alpha-\overline{a})z}{1-z}\quad (z \in \mathbb{D}),\]
where $\alpha \in \mathbb{R}$ and $\RE a >\alpha$, is univalent in
$\overline{\mathbb{D}}\setminus \{1\}$ and satisfies
$q(\mathbb{D})=\Delta$, $q(0)=a$ and $q \in Q$.

The following two lemmas are important in the above cases.

\begin{lemma}\label{ch3,lem3.3}
Let $p \in \mathcal{H}_{\beta}[a,n]$, $\beta\neq 0$. If $z_{0} \in
\mathbb{D}$ and
\[|p(z_{0})|=\max_{|z| \leq |z_{0}|} |p(z)|,\]
then
\[\frac{z_{0}p'(z_{0})}{p(z_{0})} \geq
\left(n+\frac{|p(z_{0})|^{2}-|a|^{2}-\beta
|p(z_{0})||z_{0}|^{n}}{|p(z_{0})|^{2}-|a|^{2}+\beta
|p(z_{0})||z_{0}|^{n}}\right)
 \frac{|p(z_{0})-a|^{2}}{|p(z_{0})|^{2}-|a|^{2}}\]
and
\[\RE \left( \frac{z_{0}p''(z_{0})}{p'(z_{0})}+1\right)
\geq \left(n+\frac{|p(z_{0})|^{2}-|a|^{2}-\beta
|p(z_{0})||z_{0}|^{n}}{|p(z_{0})|^{2}-|a|^{2} +\beta
|p(z_{0})||z_{0}|^{n}}\right)\frac{|p(z_{0})-a|^{2}}{|p(z_{0})|^{2}-|a|^{2}}.\]
\end{lemma}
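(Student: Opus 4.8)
The plan is to apply Lemma~\ref{ch3,lem3.2} to the function $q$ of Case~1 above, with the radius $M:=|p(z_0)|$ playing the role of the constant there. First I would note that since $\beta\neq 0$ the function $p$ is nonconstant, so by the maximum modulus principle $M=\max_{|z|\le|z_0|}|p(z)|>|p(0)|=|a|$ (assuming $z_0\neq 0$, i.e.\ $|p(z_0)|>|a|$, the case $z_0=0$ being degenerate) and $|p(z)|<M$ whenever $|z|<|z_0|$. Taking
\[q(z)=M\,\frac{Mz+a}{M+\overline{a}z},\]
we have $q\in Q$, $q(0)=a$ and $q(\mathbb{D})=\mathbb{D}_{M}$; the preceding remark shows $p(\{z:|z|<|z_0|\})\subset q(\mathbb{D})$ while $p(z_0)\in\partial\mathbb{D}_{M}=q(\partial\mathbb{D})$. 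Hence Lemma~\ref{ch3,lem3.2} applies and furnishes, writing $w_0:=p(z_0)$ and $\zeta_0:=q^{-1}(w_0)$, a number $m$ with $z_0p'(z_0)=m\,\zeta_0q'(\zeta_0)$,
\[\RE\!\left(1+\frac{z_0p''(z_0)}{p'(z_0)}\right)\ge m\,\RE\!\left(1+\frac{\zeta_0q''(\zeta_0)}{q'(\zeta_0)}\right),\qquad m\ge n+\frac{|q'(0)|-\beta|z_0|^{n}}{|q'(0)|+\beta|z_0|^{n}}.\]

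Next I would carry out the elementary computations for this explicit $q$. Differentiation gives $q'(z)=M(M^2-|a|^2)(M+\overline{a}z)^{-2}$ and $q''(z)/q'(z)=-2\overline{a}/(M+\overline{a}z)$; in particular $|q'(0)|=(M^2-|a|^2)/M$, and substituting this into the bound for $m$ yields exactly
\[m\ge n+\frac{M^2-|a|^2-\beta M|z_0|^{n}}{M^2-|a|^2+\beta M|z_0|^{n}},\]
which is the bracketed factor in the statement once $M=|p(z_0)|$ is restored. Solving $q(\zeta_0)=w_0$ gives $\zeta_0=M(w_0-a)/(M^2-\overline{a}w_0)$, whence $M\pm\overline{a}\zeta_0$ and $q'(\zeta_0)$ become rational expressions in $w_0$. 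The one point needing care is to exploit the boundary relation $|w_0|=M$, equivalently $M^2-\overline{a}w_0=w_0\,\overline{w_0-a}$; with this the expressions collapse, producing
\[\frac{z_0p'(z_0)}{p(z_0)}=\frac{m\,\zeta_0q'(\zeta_0)}{w_0}=m\,\frac{|p(z_0)-a|^2}{|p(z_0)|^2-|a|^2},\qquad \RE\!\left(1+\frac{\zeta_0q''(\zeta_0)}{q'(\zeta_0)}\right)=\frac{|p(z_0)-a|^2}{|p(z_0)|^2-|a|^2},\]
the second using $\RE(M^2+|a|^2-2\overline{a}w_0)=|w_0-a|^2$. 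In particular the common factor $|p(z_0)-a|^2/(|p(z_0)|^2-|a|^2)$ is a nonnegative real number, so the first identity shows $z_0p'(z_0)/p(z_0)$ is real and nonnegative.

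Finally I would combine these two identities with the two inequalities from Lemma~\ref{ch3,lem3.2} and the lower bound for $m$: multiplying the inequality $m\ge n+\cdots$ through by the nonnegative factor $|p(z_0)-a|^2/(|p(z_0)|^2-|a|^2)$ preserves it, and this gives both assertions. The only genuine obstacle I anticipate is the algebraic simplification in the middle step; it is routine but must be driven by the substitution $M^2-\overline{a}w_0=w_0\,\overline{w_0-a}$, which is precisely what turns the M\"obius quantities into the clean ratio $|p(z_0)-a|^2/(|p(z_0)|^2-|a|^2)$. Everything else is a transcription of Miller and Mocanu's Case~1 computation with the sharper $m$ supplied by Lemma~\ref{ch3,lem3.2}.
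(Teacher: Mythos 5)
Your proposal is correct and follows essentially the same route as the paper: apply Lemma~\ref{ch3,lem3.2} with the Case~1 disk map $q(z)=M(Mz+a)/(M+\overline{a}z)$, $M=|p(z_0)|$, compute $|q'(0)|=(M^2-|a|^2)/M$ to sharpen the bound on $m$, and reduce $\zeta_0 q'(\zeta_0)$ and $1+\zeta_0 q''(\zeta_0)/q'(\zeta_0)$ to the factor $|p(z_0)-a|^2/(|p(z_0)|^2-|a|^2)$. The only difference is that you carry out the M\"obius computations explicitly, whereas the paper imports those identities by citing the proof of Lemma~2.2e in Miller--Mocanu.
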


\begin{proof}
Set $M=|p(z_{0})|$. Similar to  the proof of Lemma 2.2e in \cite[p.\
25]{monograph}, Lemma \ref{ch3,lem3.2} shows there exists an $m$
satisfying \eqref{ch3,eq3.6} such that
\begin{equation}\label{ch3,eq3.7}
z_{0}p'(z_{0})=mp(z_{0})\frac{|p(z_{0})-a|^{2}}{|p(z_{0})|^{2}-|a|^{2}}
\quad \mbox{and} \quad \RE
\left(\frac{z_{0}p''(z_{0})}{p'(z_{0})}+1\right) \geq m
\frac{|p(z_{0})-a|^{2}}{|p(z_{0})|^{2}-|a|^{2}}.
\end{equation}
Since
\[q'(z)=M\frac{M^{2}-|a|^{2}}{(M+\overline{a}z)^{2}},\]
it readily follows that
\begin{align*}
    q'(0)&=\frac{|p(z_{0})|^{2}-|a|^{2}}{|p(z_{0})|}.
\end{align*}
Therefore \eqref{ch3,eq3.6} becomes
\[m \geq n+\frac{|p(z_{0})|^{2}-|a|^{2}-\beta |p(z_{0})||z_{0}|^{n}}{|p(z_{0})|^{2}-|a|^{2}+\beta |p(z_{0})||z_{0}|^{n}},\]
so that \eqref{ch3,eq3.7} gives the desired result.
\end{proof}

\begin{remark}\label{ch3,rem3.4}
If $a=0$, then  Lemma \ref{ch3,lem3.3} reduces to Lemma
\ref{ch3,lem3.1}.
\end{remark}

\begin{lemma}\label{ch3,lem3.5}
Let $p \in \mathcal{H}_{\beta}[a,n]$, $\beta\neq0$. If $z_{0} \in
\mathbb{D}$ and
\[\RE p(z_{0})=\min_{|z| \leq |z_{0}|} \RE p(z),\]
then
\[z_{0}p'(z_{0}) \leq -\frac{1}{2}\left(n+\frac{2
|\RE (a-p(z_{0}))|-\beta |z_{0}|^{n}}{2 |\RE (a-p(z_{0}))|+\beta
|z_{0}|^{n}}\right) \frac{|p(z_{0})-a|^{2}}{\RE (a-p(z_{0}))}\] and
\[\RE \left( \frac{z_{0}p''(z_{0})}{p'(z_{0})}+1\right) \geq 0.\]
\end{lemma}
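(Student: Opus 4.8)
The plan is to mimic the argument of Lemma \ref{ch3,lem3.3}, but now using the half-plane function $q(z)=\bigl(a-(2\alpha-\overline{a})z\bigr)/(1-z)$ from Case 2 instead of the disk function, with $\alpha$ chosen to be exactly $\RE p(z_{0})$. First I would set $\alpha=\RE p(z_{0})$, so that the hypothesis $\RE p(z_{0})=\min_{|z|\le|z_{0}|}\RE p(z)$ says precisely that $p(\{z:|z|<|z_{0}|\})\subset q(\mathbb{D})=\{w:\RE w>\alpha\}$ while $p(z_{0})\in q(\partial\mathbb{D})$. Hence Lemma \ref{ch3,lem3.2} applies: there is an $m$ satisfying \eqref{ch3,eq3.6} with $z_{0}p'(z_{0})=m\zeta_{0}q'(\zeta_{0})$ and $\RE\bigl(1+z_{0}p''(z_{0})/p'(z_{0})\bigr)\ge m\,\RE\bigl(1+\zeta_{0}q''(\zeta_{0})/q'(\zeta_{0})\bigr)$, where $\zeta_{0}=q^{-1}(p(z_{0}))\in\partial\mathbb{D}$.

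The next step is the routine computation, exactly as in Miller--Mocanu's Lemma 2.2f, of the two quantities $\zeta_{0}q'(\zeta_{0})$ and $\RE\bigl(1+\zeta_{0}q''(\zeta_{0})/q'(\zeta_{0})\bigr)$ for this explicit $q$. One finds $q'(z)=2(\RE a-\alpha)/(1-z)^{2}$, and a short calculation with $p(z_{0})=q(\zeta_{0})$ and $|\zeta_{0}|=1$ gives
\begin{equation*}
\zeta_{0}q'(\zeta_{0})=-\frac{|p(z_{0})-a|^{2}}{2\,\RE(a-p(z_{0}))}
=-\frac{|p(z_{0})-a|^{2}}{2|\RE(a-p(z_{0}))|},
\end{equation*}
the second equality because $\RE(a-p(z_{0}))=\RE a-\alpha>0$ under the standing assumption $\RE a>\alpha$; moreover the half-plane map satisfies $\RE\bigl(1+\zeta_{0}q''(\zeta_{0})/q'(\zeta_{0})\bigr)=0$ for $\zeta_{0}\in\partial\mathbb{D}\setminus\{1\}$. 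Since $m>0$, the inequality for $\RE\bigl(1+z_{0}p''(z_{0})/p'(z_{0})\bigr)\ge 0$ is then immediate.

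For the first assertion I would substitute the value of $\zeta_{0}q'(\zeta_{0})$ into $z_{0}p'(z_{0})=m\zeta_{0}q'(\zeta_{0})$, obtaining $z_{0}p'(z_{0})=-\tfrac{1}{2}m\,|p(z_{0})-a|^{2}/\RE(a-p(z_{0}))$, and then plug in the lower bound \eqref{ch3,eq3.6} for $m$. The only remaining point is to evaluate $q'(0)$ for the half-plane map, namely $q'(0)=2(\RE a-\alpha)=2\,\RE(a-p(z_{0}))=2|\RE(a-p(z_{0}))|$, which turns \eqref{ch3,eq3.6} into
\begin{equation*}
m\ge n+\frac{2|\RE(a-p(z_{0}))|-\beta|z_{0}|^{n}}{2|\RE(a-p(z_{0}))|+\beta|z_{0}|^{n}};
\end{equation*}
combining this with the sign of $z_{0}p'(z_{0})$ (the coefficient $-\tfrac12|p(z_0)-a|^2/\RE(a-p(z_0))$ is negative, so a larger $m$ makes $z_0p'(z_0)$ smaller) yields the stated upper bound. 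The main thing to be careful about is the direction of the inequality when multiplying the bound on $m$ by the negative factor, and the correct handling of absolute values; the underlying geometry and the application of Lemma \ref{ch3,lem3.2} are the same as in the disk case, so I expect no serious obstacle.
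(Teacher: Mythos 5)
Your proposal is correct and follows essentially the same route as the paper: set $\alpha=\RE p(z_{0})$, apply Lemma \ref{ch3,lem3.2} with the half-plane map $q$ of Case 2 (the paper cites Miller--Mocanu's Lemma 2.2f for the explicit values $z_{0}p'(z_{0})=-\tfrac{m}{2}|p(z_{0})-a|^{2}/\RE(a-p(z_{0}))$ and the nonnegativity of $\RE(1+z_{0}p''(z_{0})/p'(z_{0}))$, which you instead compute directly), and then use $q'(0)=2\RE(a-p(z_{0}))$ to turn \eqref{ch3,eq3.6} into the stated bound on $m$.
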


\begin{proof}
If we set $\alpha=\RE p(z_{0})$, then Lemma 2.2f in  \cite[p.\
26]{monograph} and  Lemma  \ref{ch3,lem3.2} give the existence of
$m$ satisfying \eqref{ch3,eq3.6} such that
\begin{equation}\label{ch3,eq3.8}
z_{0}p'(z_{0})=-\frac{m}{2}\frac{|p(z_{0})-a|^{2}}{\RE[a-p(z_{0})]}
\quad \mbox{and} \quad \RE
\left(\frac{z_{0}p''(z_{0})}{p'(z_{0})}+1\right) \geq 0.
\end{equation}
Now
\[q'(z)=\frac{2(\RE a-\alpha)}{(1-z)^{2}}\]
implies
\begin{align*}
    q'(0)&=2(\RE a-\alpha)=2\RE (a- p(z_{0})).
\end{align*}
Therefore \eqref{ch3,eq3.6} becomes
\[m \geq n+\frac{2 |\RE (a-p(z_{0}))|-\beta|z_{0}|^{n} }{2 |\RE (a-p(z_{0}))|+\beta |z_{0}|^{n}},\]
so that \eqref{ch3,eq3.8} gives the desired result.
\end{proof}

\section{$\beta$-admissible functions and fundamental theorems}\label{ch3,sec4}
\noindent In this section we will prove fundamental results related
to the implication
\[\{\psi(p(z),z p'(z),z^{2}p''(z)): z \in \mathbb{D}\}
\subset \Omega \quad \Rightarrow \quad  p(\mathbb{D}) \subset
\Delta\] for a suitably defined class of functions.

\begin{definition}[$\beta$-Admissibility Condition]\label{ch3,def4.1}
Let $\Omega$ be a domain in $\mathbb{C}$, $q \in {Q}$, and $\beta
\in \mathbb{C}$ with $\beta \leq |q'(0)|$. The class
$\Psi_{n,\beta}(\Omega,q)$ consists of
\boldmath\textbf{$\beta$-admissible functions}\unboldmath
$\psi:\mathbb{C}^3\rightarrow \mathbb{C}$   satisfying the following
conditions:
\begin{itemize}
  \item[(i)] $ \psi(r,s,t)$ is continuous in a domain $D\subset \mathbb{C}^3$,
  \item[(ii)] $(q(0),0,0)\in{D}$ and $\psi(q(0),0,0)\in{\Omega}$,
  \item[(iii)] $\psi(r_0,s_0,t_0)\not\in{\Omega}$ whenever $(r_0,s_0,t_0)\in{D}$, $r_0=q(\zeta)$, $s_0=m \zeta  q'(\zeta)$ and
  \[ \RE \left( \frac{t_0}{s_0}+1\right) \geq m \RE \left( \frac{\zeta q''(\zeta)}{q'(\zeta)}+1\right),\]
  where $|\zeta|=1$, $q(\zeta)$ is finite and
  \[m \geq n+\frac{|q'(0)|-\beta }{|q'(0)|+\beta }.\]
\end{itemize}
The class $\Psi_{1,\beta}(\Omega,q)$ is denoted by
$\Psi_{\beta}(\Omega,q)$.
\end{definition}

Note that $\Omega$ is not required to be simply-connected or has a particularly nice boundary as we do for $q(\mathbb{D})$. If $\beta=|q'(0)|$,
then the concept of $\beta$-admissibility coincides with the usual admissibility as discussed in \cite{millermocanu1981}, that is,
$\Psi_n\equiv\Psi_{n,|q'(0)|}$.  It is also evident from the definition that
\[  \Psi_n\equiv\Psi_{n,|q'(0)|} \subseteq \Psi_{n,\beta_1} \subseteq
\Psi_{n,\beta_2}\subseteq \Psi_{n,0}\equiv\Psi_{n+1}  \quad ( 0\leq
\beta_2 \leq \beta_1 \leq |q'(0)| ).
\] In view of the above inclusions, it is assumed throughout this sequel that $0<\beta  \leq |q'(0)|$.

\begin{theorem}\label{ch3,th4.2}
Let $q(0)=a$, $\psi \in \Psi_{n,\beta}(\Omega,q)$ with associated
domain D, and $\beta \in \mathbb{C}$ with $0<\beta  \leq |q'(0)|$.
Let $p \in \mathcal{H}_{\beta}[a,n]$. If $(p(z), zp'(z),
z^{2}p''(z))\in {D}$ and
\begin{equation}\label{ch3,eq4.1}
 \psi (p(z), zp'(z), z^{2}p''(z)) \in {\Omega}\quad ( z\in
 {\mathbb{D}}),
\end{equation}
 then $p\prec q$.
\end{theorem}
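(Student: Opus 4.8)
The plan is to argue by contradiction, following the classical Miller--Mocanu template but using the refined admissibility condition in Definition \ref{ch3,def4.1}. Suppose $p$ is not subordinate to $q$. Since $p(0)=q(0)=a$ and $q$ is univalent with $q \in Q$, a standard lemma (Lemma 2.2d in \cite{monograph}, or equivalently the setup preceding Lemma \ref{ch3,lem3.2}) guarantees the existence of points $z_0 \in \mathbb{D}$ and $\zeta_0 \in \partial\mathbb{D}\setminus E(q)$ and a number $m \geq n$ such that $p(\{z:|z|<|z_0|\}) \subset q(\mathbb{D})$, $p(z_0) = q(\zeta_0)$, together with the two identities $z_0 p'(z_0) = m\zeta_0 q'(\zeta_0)$ and $\RE\bigl(1 + z_0 p''(z_0)/p'(z_0)\bigr) \geq m\,\RE\bigl(1 + \zeta_0 q''(\zeta_0)/q'(\zeta_0)\bigr)$.

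The key improvement over the classical case is the lower bound on $m$. First I would invoke Lemma \ref{ch3,lem3.2}: since $p \in \mathcal{H}_{\beta}[a,n]$, the quantity $m$ above can be chosen to satisfy
\[
m \geq n + \frac{|q'(0)| - \beta|z_0|^{n}}{|q'(0)| + \beta|z_0|^{n}}.
\]
Because $0 < |z_0| < 1$ and the function $x \mapsto (|q'(0)| - \beta x)/(|q'(0)| + \beta x)$ is decreasing in $x \geq 0$ (using $\beta \leq |q'(0)|$ so the numerator stays nonnegative at $x=1$), replacing $|z_0|^n$ by $1$ only decreases the right-hand side, giving
\[
m \geq n + \frac{|q'(0)| - \beta}{|q'(0)| + \beta},
\]
which is precisely the bound required in part (iii) of the $\beta$-admissibility definition.

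Now set $r_0 = p(z_0) = q(\zeta_0)$, $s_0 = z_0 p'(z_0) = m\zeta_0 q'(\zeta_0)$, and $t_0 = z^2_0 p''(z_0)$. By hypothesis $(r_0, s_0, t_0) \in D$, and the differential identities above show that this triple meets exactly the conditions listed in (iii): $r_0 = q(\zeta_0)$ with $|\zeta_0|=1$ and $q(\zeta_0)$ finite, $s_0 = m\zeta_0 q'(\zeta_0)$, and $\RE(t_0/s_0 + 1) \geq m\,\RE(\zeta_0 q''(\zeta_0)/q'(\zeta_0) + 1)$, with $m$ satisfying the displayed inequality. Therefore $\beta$-admissibility forces $\psi(r_0, s_0, t_0) \notin \Omega$. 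But $\psi(r_0, s_0, t_0) = \psi(p(z_0), z_0 p'(z_0), z^2_0 p''(z_0))$, which lies in $\Omega$ by \eqref{ch3,eq4.1}. This contradiction establishes $p \prec q$. The main obstacle is really just the bookkeeping in the first paragraph: one must be careful that the point $z_0$ supplied by the Miller--Mocanu maximal-disk argument is the same point fed into Lemma \ref{ch3,lem3.2}, and that the case $p(z) \equiv a$ (where the conclusion is trivial) is separated out so that Lemma \ref{ch3,lem3.2} applies; once the uniform lower bound on $m$ is in hand, the verification of condition (iii) is immediate.
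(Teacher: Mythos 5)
Your proposal is correct and follows essentially the same route as the paper: contradiction via the Miller--Mocanu boundary point $z_0$, the refined bound on $m$ from Lemma \ref{ch3,lem3.2}, the monotonicity step replacing $|z_0|^n$ by $1$ (the paper phrases this via the decreasing function $H(r)=(a-br^n)/(a+br^n)$), and then condition (iii) of Definition \ref{ch3,def4.1}. Your explicit remark about separating out the trivial case $p\equiv a$ so that Lemma \ref{ch3,lem3.2} applies is a small point the paper leaves implicit.
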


\begin{proof}
Taking into account that  $q$ is univalent in $\mathbb{D}$ and
$p(0)=q(0)=a$, it remains to show that $p(\mathbb{D}) \subset
q(\mathbb{D})$. Let, if possible, $p(\mathbb{D}) \not\subset
q(\mathbb{D})$. Then there exists a point $z_{0} \in \mathbb{D}$ for
which $p(\{z:|z|<|z_{0}|\}) \subset q(\mathbb{D})$ and $p(\{z:|z|
\leq |z_{0}|\}) \not\subset q(\mathbb{D})$. Since $p(\{z:|z| \leq
|z_{0}|\}) \subset q(\overline{\mathbb{D}})$, therefore $p(z_{0})
\in \partial q(\mathbb{D})=q(\partial \mathbb{D})$. At the point
$z_{0}$,  Lemma \ref{ch3,lem3.2} shows that
\[p(z_{0})=q(\zeta_{0}),\quad z_{0}p'(z_{0})=m\zeta_{0}q'(z_{0}),\]
and
\[\RE \left(\frac{z_{0}p''(z_{0})}{p'(z_{0})}+1\right) \geq
m\RE \left(\frac{\zeta_{0}q''(\zeta_{0})}{q'(\zeta_{0})}+1\right),\]
where $|\zeta_{0}|=1$, $q(\zeta_{0})$ is finite and
\[m \geq n+\frac{|q'(0)|-\beta |z_{0}|^{n}}{|q'(0)|+\beta |z_{0}|^{n}}.\]
The function $H: [0,1] \rightarrow \mathbb{R}$ given by
\[H(r)=\frac{a-br^n}{a+br^n}, \quad a\geq 0, b\geq 0  \]
is a decreasing function of $r$, and so
\[ m \geq n+\frac{|q'(0)|-\beta}{|q'(0)|+\beta}.\]
With $r_{0}=p(z_{0})$, $s_{0}=z_{0}p'(z_{0})$ and
$t_{0}=z_{0}^{2}p''(z_{0})$ in   part $(iii)$ of Definition
\ref{ch3,def4.1}, the condition
\[\psi (p(z_{0}), z_{0}p'(z_{0}), z_{0}^{2}p''(z)) \not\in {\Omega}\]
 contradicts \eqref{ch3,eq4.1}. Hence $p(\mathbb{D}) \subset
q(\mathbb{D})$ and $p \prec q$.
\end{proof}

The proof of the following result is similar to Corollary 1.1 in
\cite[p.\ 160]{millermocanu1981}.
\begin{corollary}\label{ch3,cor4.3}
Let $q$ be univalent in $\mathbb{D}$ with $q(0)=a$, and
$q_{\rho}(z)=q(\rho z)$, $0< \rho <1$. Let $\psi \in
\Psi_{n,\beta}(\Omega,q_{\rho})$ with domain $D$, $0< \rho <1$,
where $\beta \in \mathbb{C}$ with $0<\beta  \leq |q'(0)|$, and  $p
\in \mathcal{H}_{\beta}[a,n]$. If $(p(z), zp'(z), z^{2}p''(z))\in
{D}$ and
\[ \psi (p(z), zp'(z), z^{2}p''(z)) \in {\Omega}\quad ( z\in
 {\mathbb{D}}),\]  then $p\prec q$.
\end{corollary}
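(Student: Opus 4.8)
The plan is to reduce to Theorem \ref{ch3,th4.2} applied with $q_{\rho}$ in place of $q$, and then to close by transitivity of subordination. The first step is to record that $q_{\rho}(z)=q(\rho z)$ inherits enough regularity to lie in $Q$: since $q$ is univalent in $\mathbb{D}$, the function $q_{\rho}$ is analytic on $\overline{\mathbb{D}}_{1/\rho}\supset\overline{\mathbb{D}}$, hence analytic and injective on $\overline{\mathbb{D}}$ with $E(q_{\rho})=\emptyset$, and $q_{\rho}'(\zeta)=\rho q'(\rho\zeta)\neq 0$ for $\zeta\in\partial\mathbb{D}$ because univalent functions have non-vanishing derivative. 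Thus $q_{\rho}\in Q$, and moreover $q_{\rho}(0)=q(0)=a$, so that the hypothesis $p\in\mathcal{H}_{\beta}[a,n]$ matches the requirement $p\in\mathcal{H}_{\beta}[q_{\rho}(0),n]$ of Theorem \ref{ch3,th4.2}.

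Next I would invoke Theorem \ref{ch3,th4.2} with $q$ replaced by $q_{\rho}$. Its hypotheses are exactly those assumed here: $\psi\in\Psi_{n,\beta}(\Omega,q_{\rho})$ with associated domain $D$, $p\in\mathcal{H}_{\beta}[a,n]$, $(p(z),zp'(z),z^{2}p''(z))\in D$, and $\psi(p(z),zp'(z),z^{2}p''(z))\in\Omega$ for $z\in\mathbb{D}$. Note that membership $\psi\in\Psi_{n,\beta}(\Omega,q_{\rho})$ already encodes the admissibility constraint $\beta\leq|q_{\rho}'(0)|=\rho|q'(0)|$, so no separate verification of that inequality is needed. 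The conclusion of the theorem then yields $p\prec q_{\rho}$.

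Finally, since $w(z)=\rho z$ is an analytic self-map of $\mathbb{D}$ fixing the origin, the definition of subordination gives $q_{\rho}=q\circ w\prec q$. Combining $p\prec q_{\rho}$ with $q_{\rho}\prec q$, and using transitivity of subordination (legitimate here because $q$ is univalent), we obtain $p\prec q$, as desired.

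I do not anticipate a genuine obstacle: the entire content is the observation that passing from $q$ to $q_{\rho}$ upgrades ``univalent in $\mathbb{D}$'' to ``belongs to $Q$,'' so that Theorem \ref{ch3,th4.2} becomes applicable, while costing only the harmless intermediate subordination $q_{\rho}\prec q$. The one point deserving a moment's care is confirming $q_{\rho}\in Q$ together with $q_{\rho}(0)=a$, which is handled in the first paragraph above; this mirrors Corollary 1.1 in \cite[p.\ 160]{millermocanu1981}.
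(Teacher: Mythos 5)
Your argument is correct and is exactly the intended one: the paper gives no details, merely noting the proof is as in Corollary 1.1 of Miller--Mocanu \cite{millermocanu1981}, which is precisely your reduction --- $q_{\rho}\in Q$ with $q_{\rho}(0)=a$, apply Theorem \ref{ch3,th4.2} with $q_{\rho}$ to get $p\prec q_{\rho}$, then conclude via $q_{\rho}\prec q$. Your remarks on $q_{\rho}'(\zeta)\neq 0$ and on the constraint $\beta\leq|q_{\rho}'(0)|$ being built into $\psi\in\Psi_{n,\beta}(\Omega,q_{\rho})$ are consistent with Definition \ref{ch3,def4.1}, so nothing is missing.
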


\begin{definition}\label{ch3,def4.4}
Let $\Omega \neq \mathbb{C}$ be a simply connected domain in
$\mathbb{C}$, $q \in {Q}$ and $\beta \in \mathbb{C}$. Let $h$ be a
conformal mapping of $\mathbb{D}$ onto $\Omega$. Denote by
$\Psi_{n,\beta}(h,q)$ the class of functions $\psi \in
\Psi_{n,\beta}(\Omega,q)=\Psi_{n,\beta}(h(\mathbb{D}),q)$ which are
analytic in their associated domains $D$ and satisfy
$\psi(q(0),0,0)=h(0)$. We write $\Psi_{1,\beta}(h,q)$ as
$\Psi_{\beta}(h,q)$.
\end{definition}

The following theorem and corollary are immediate consequences of
Theorem \ref{ch3,th4.2} and Corollary \ref{ch3,cor4.3}.
\begin{theorem}\label{ch3,th4.5}
Let $q(0)=a$ and  $\psi \in \Psi_{n,\beta}(h,q)$ with associated
domain D, where $\beta \in \mathbb{C}$ with $0<\beta  \leq |q'(0)|$.
Let $p \in \mathcal{H}_{\beta}[a,n]$. If $(p(z), zp'(z),
z^{2}p''(z))\in {D}$ and
\[ \psi (p(z), zp'(z), z^{2}p''(z)) \prec h(z) \quad ( z\in
 {\mathbb{D}}),\]
  then $p\prec q$.
\end{theorem}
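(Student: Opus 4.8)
The plan is to derive Theorem~\ref{ch3,th4.5} directly from Theorem~\ref{ch3,th4.2} by translating the subordination hypothesis into a membership hypothesis. First I would recall that since $h$ is a conformal mapping of $\mathbb{D}$ onto the simply connected domain $\Omega$ with $\psi(q(0),0,0)=h(0)$, the subordination
\[
\psi(p(z),zp'(z),z^2p''(z)) \prec h(z)
\]
is equivalent, by the very definition of subordination between an analytic function and a univalent function, to the inclusion
\[
\{\psi(p(z),zp'(z),z^2p''(z)) : z \in \mathbb{D}\} \subset h(\mathbb{D}) = \Omega .
\]
Thus the hypothesis of Theorem~\ref{ch3,th4.5} is precisely the hypothesis \eqref{ch3,eq4.1} of Theorem~\ref{ch3,th4.2}.

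Second, I would observe that the class $\Psi_{n,\beta}(h,q)$ from Definition~\ref{ch3,def4.4} is, by construction, a subclass of $\Psi_{n,\beta}(\Omega,q) = \Psi_{n,\beta}(h(\mathbb{D}),q)$: the extra requirements (analyticity of $\psi$ in $D$ and the normalization $\psi(q(0),0,0)=h(0)$) only restrict the class further. In particular any $\psi \in \Psi_{n,\beta}(h,q)$ is $\beta$-admissible in the sense of Definition~\ref{ch3,def4.1} with respect to $\Omega$, so conditions (i), (ii), (iii) there are all satisfied.

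Finally, with $p \in \mathcal{H}_\beta[a,n]$, $(p(z),zp'(z),z^2p''(z)) \in D$ for $z \in \mathbb{D}$, and the membership \eqref{ch3,eq4.1} established, I would simply invoke Theorem~\ref{ch3,th4.2} to conclude $p \prec q$. The corresponding corollary follows the same way, replacing $q$ by $q_\rho$ and appealing to Corollary~\ref{ch3,cor4.3} in place of Theorem~\ref{ch3,th4.2}. There is no real obstacle here: the only point requiring a moment's care is the equivalence between the subordination $\psi(\cdots) \prec h$ and the set inclusion into $\Omega$, which rests on $h$ being univalent onto $\Omega$ and on matching the base points via $\psi(q(0),0,0)=h(0)$; once that is noted, the theorem is an immediate restatement of Theorem~\ref{ch3,th4.2}.
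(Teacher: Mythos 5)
Your proposal is correct and is exactly the route the paper takes: the paper states Theorem~\ref{ch3,th4.5} as an immediate consequence of Theorem~\ref{ch3,th4.2}, the point being precisely that for univalent $h$ the subordination hypothesis amounts to $\{\psi(p(z),zp'(z),z^{2}p''(z)):z\in\mathbb{D}\}\subset h(\mathbb{D})=\Omega$, while $\Psi_{n,\beta}(h,q)\subseteq\Psi_{n,\beta}(\Omega,q)$ by Definition~\ref{ch3,def4.4}. Your additional remark about matching base points via $\psi(q(0),0,0)=h(0)$ is a fine observation, though only the implication from subordination to set inclusion is actually needed to invoke Theorem~\ref{ch3,th4.2}.
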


\begin{corollary}\label{ch3,cor4.6}
Let $h$ and $q$ be univalent in $\mathbb{D}$ with $q(0)=a$, and let
$h_{\rho}(z)=h(\rho z)$, $q_{\rho}(z)=q(\rho z)$, for $0<\rho<1$.
Let $\psi \in \Psi_{n,\beta}(h_{\rho},q_{\rho})$ with domain $D$,
for $0<\rho<1$, where $\beta \in \mathbb{C}$ with $0<\beta  \leq
|q'(0)|$ and let $p \in \mathcal{H}_{\beta}[a,n]$. If $(p(z),
zp'(z), z^{2}p''(z))\in {D}$  and
\[ \psi (p(z), zp'(z), z^{2}p''(z)) \prec h(z)\quad ( z\in
 {\mathbb{D}}),\]  then $p\prec q$.
\end{corollary}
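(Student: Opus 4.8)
The plan is to deduce this from Theorem \ref{ch3,th4.5} by the dilation argument of Miller and Mocanu. The point of passing to $q_\rho(z)=q(\rho z)$ is that, although $q$ is only assumed univalent in $\mathbb{D}$, its contraction $q_\rho$ is analytic and univalent on the closed disk $\overline{\mathbb{D}}$ with $q_\rho'\neq 0$ on $\partial\mathbb{D}$, hence automatically lies in the class $Q$ required by Theorem \ref{ch3,th4.5}; the price is that the solution must be contracted as well, which replaces the fixed coefficient $\beta$ by $\beta\rho^n$.

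First I would fix $\rho\in(0,1)$ and set $p_\rho(z)=p(\rho z)$. From $p(z)=a+\beta z^n+\cdots$ one gets $p_\rho(z)=a+\beta\rho^n z^n+\cdots$, so $p_\rho\in\mathcal{H}_{\beta\rho^n}[a,n]$, and $q_\rho(0)=q(0)=a$ with $q_\rho\in Q$. Writing $w=\rho z$, a direct differentiation gives $zp_\rho'(z)=wp'(w)$ and $z^2p_\rho''(z)=w^2p''(w)$, whence
\[\psi\bigl(p_\rho(z),zp_\rho'(z),z^2p_\rho''(z)\bigr)=\psi\bigl(p(w),wp'(w),w^2p''(w)\bigr),\qquad w=\rho z .\]
As $z$ runs over $\mathbb{D}$, $w$ runs over $\mathbb{D}_\rho\subset\mathbb{D}$, so the hypothesis $(p(z),zp'(z),z^2p''(z))\in D$ forces $(p_\rho(z),zp_\rho'(z),z^2p_\rho''(z))\in D$ for every $z\in\mathbb{D}$. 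For the subordination, since $h$ is univalent there is a Schwarz function $\omega$ with $\psi(p(z),zp'(z),z^2p''(z))=h(\omega(z))$; putting $\widetilde\omega(z)=\omega(\rho z)/\rho$, the Schwarz lemma gives $|\widetilde\omega(z)|\le|z|$ on $\mathbb{D}$ and $\widetilde\omega(0)=0$, so
\[\psi\bigl(p_\rho(z),zp_\rho'(z),z^2p_\rho''(z)\bigr)=h(\omega(\rho z))=h_\rho\bigl(\widetilde\omega(z)\bigr)\prec h_\rho(z).\]

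It remains to feed this into Theorem \ref{ch3,th4.5} with $\beta$ replaced by $\beta\rho^n$, $q$ by $q_\rho$, $h$ by $h_\rho$ and $p$ by $p_\rho$. Here $0<\beta\rho^n\le\rho\,|q'(0)|=|q_\rho'(0)|$, and since $\beta\rho^n\le\beta$ the monotonicity of the admissible classes in the fixed coefficient, $\Psi_{n,\beta}(h_\rho,q_\rho)\subseteq\Psi_{n,\beta\rho^n}(h_\rho,q_\rho)$ (recorded after Definition \ref{ch3,def4.1}), shows the membership hypothesis persists. Theorem \ref{ch3,th4.5} then yields $p_\rho\prec q_\rho$, i.e. $p(\mathbb{D}_\rho)=p_\rho(\mathbb{D})\subset q_\rho(\mathbb{D})=q(\mathbb{D}_\rho)\subset q(\mathbb{D})$; letting $\rho\to1^-$ gives $p(\mathbb{D})\subset q(\mathbb{D})$, and with $p(0)=q(0)=a$ and $q$ univalent we conclude $p\prec q$. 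The only point needing care is precisely this change of the fixed coefficient from $\beta$ to $\beta\rho^n$: one must check both that $q_\rho\in Q$ and that the lower bound on $m$ demanded by Definition \ref{ch3,def4.1}(iii) for $\Psi_{n,\beta\rho^n}(\cdot,q_\rho)$ is implied by the one available from $\Psi_{n,\beta}(h_\rho,q_\rho)$ — which is exactly the content of the inclusion above, so no genuine difficulty arises.
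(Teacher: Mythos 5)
Your proof is correct and follows essentially the route the paper intends: the paper offers no written proof (it declares the corollary an immediate consequence of Theorem \ref{ch3,th4.2} and Corollary \ref{ch3,cor4.3}, deferring to the Miller--Mocanu dilation argument), and your argument is exactly that dilation scheme. You have in fact supplied the one detail specific to the fixed-coefficient setting that the paper glosses over --- that $p_\rho$ lies in $\mathcal{H}_{\beta\rho^n}[a,n]$ rather than $\mathcal{H}_{\beta}[a,n]$, so one must invoke the inclusion $\Psi_{n,\beta}(h_\rho,q_\rho)\subseteq\Psi_{n,\beta\rho^n}(h_\rho,q_\rho)$ and check $\beta\rho^n\leq|q_\rho'(0)|$ before applying Theorem \ref{ch3,th4.5} --- which you do correctly.
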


\subsection{Two special cases}
\noindent Let us next formulate the theorems above to the two
important examples of $q(\mathbb{D})$ being a disk and
$q(\mathbb{D})$ being a half-plane considered earlier.\\

\noindent \textbf{Case 1.} The disk
$\Delta=\mathbb{D}_{M}=\{w:|w|<M\}$. Here the function
\[q(z)=M\frac{Mz+a}{M+\overline{a}z}\quad (z\in \mathbb{D}),\]
where $M>0$ and $|a|<M$, is univalent in $\overline{\mathbb{D}}$ and
satisfies $q(\mathbb{D})=\Delta$, $q(0)=a$ and $q \in Q$. To
describe the class of $\beta$-admissible functions in this case, set
$\Psi_{n,\beta}(\Omega,M,a):=\Psi_{n,\beta}(\Omega,q)$ and when
$\Omega=\Delta$, denote the class by $\Psi_{n,\beta}(M,a)$.
Comparing with Lemma 2.2e in \cite[p.\ 25]{monograph}, the condition
of $\beta$-admissibility becomes
\begin{equation}\label{ch3,eq4.2}
\begin{split}
\psi(r,s,t) &\not\in \Omega \quad \mbox{whenever} \quad (r,s,t) \in D,\\
r&=q(\zeta)=Me^{i\theta},\\
s&=m\zeta q'(\zeta)=m\frac{M|M-\overline{a}e^{i \theta}|^{2}}{M^{2}-|a|^{2}}e^{i \theta}, \quad \mbox{and}\\
\RE \left(\frac{t}{s}+1\right) &\geq m \frac{|M-\overline{a}e^{i
\theta}|^{2}}{M^{2}-|a|^{2}},
\end{split}
\end{equation}
where $\theta \in \mathbb{R}$ and
\[m \geq n+\frac{M^{2}-|a|^{2}-M\beta }{M^{2}-|a|^{2}+M\beta }.\]
Thus, the class $\Psi_{n,\beta}(\Omega,M,a)$ consists of those
functions $\psi:\mathbb{C}^3\rightarrow \mathbb{C}$ that are
continuous in a domain $D \subset \mathbb{C}^3$ with $(a,0,0) \in D$
and $\psi(a,0,0) \in \Omega$, and  satisfying the
$\beta$-admissibility condition \eqref{ch3,eq4.2}.

If $a=0$, then \eqref{ch3,eq4.2} simplifies to
\[\psi(r,s,t) \not\in \Omega \quad \mbox{whenever} \quad (r,s,t) \in D,\
r=Me^{i\theta},\  s=mMe^{i \theta},\  \mbox{and}\]
\[  \RE \left(\frac{t}{s}+1\right) \geq m, \]
where $\theta \in \mathbb{R}$ and
\[m \geq n+\frac{M-\beta }{M+\beta }.\]
Equivalently, the condition is
\begin{equation}\label{ch3,eq4.3}
\begin{split}
\psi(Me^{i \theta},Ke^{i \theta},L) &\not\in \Omega \quad
\mbox{whenever}
\quad (Me^{i \theta},Ke^{i \theta},L) \in D,\\
K &\geq \left(n+\frac{M-\beta }{M+\beta }\right)M,\quad
\mbox{and}\quad
 \RE (Le^{-i \theta})  \geq \left(n-\frac{2\beta }{M+\beta }\right)K,
\end{split}
\end{equation}
where $\theta \in \mathbb{R} $ and $n \geq 1$.

In this particular case, Theorem \ref{ch3,th4.2} can be expressed in
the following form:
\begin{theorem}\label{ch3,th4.7}
 Let $p \in \mathcal{H}_{\beta}[a,n]$ with
$|a|<M$, $0<\beta  \leq (M^{2}-|a|^{2})/M$, $M>0$.
\begin{enumerate}
  \item [(i)]
Let $\psi \in \Psi_{n,\beta}(\Omega,M,a)$ with associated domain
$D$. If $(p(z), zp'(z), z^{2}p''(z))\in {D}$ and
\[ \psi (p(z), zp'(z), z^{2}p''(z)) \in \Omega\quad ( z\in
 {\mathbb{D}}),  \]
then $|p(z)|<M.$
  \item [(ii)] Let $\psi \in \Psi_{n,\beta}(M,a)$ with associated domain $D$.
   If $(p(z), zp'(z), z^{2}p''(z))\in {D}$  and
\[| \psi (p(z), zp'(z), z^{2}p''(z))|<M \quad ( z\in
 {\mathbb{D}}) ,\]
 then
 $|p(z)|<M.$\\
\end{enumerate}
\end{theorem}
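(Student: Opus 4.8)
The plan is to derive Theorem \ref{ch3,th4.7} as a direct specialization of Theorem \ref{ch3,th4.2} to the function $q(z)=M(Mz+a)/(M+\overline{a}z)$ from Case 1, for which $q(\mathbb{D})=\mathbb{D}_M$. First I would record that this $q$ is univalent on $\overline{\mathbb{D}}$, belongs to $Q$, satisfies $q(0)=a$, and that $q'(0)=(M^2-|a|^2)/M$; hence the standing hypothesis $0<\beta\le|q'(0)|$ in Theorem \ref{ch3,th4.2} translates precisely into $0<\beta\le(M^2-|a|^2)/M$, which is exactly the assumption imposed here. Since $p\in\mathcal{H}_\beta[a,n]$ with $p(0)=a=q(0)$, the conclusion $p\prec q$ of Theorem \ref{ch3,th4.2} is equivalent to $p(\mathbb{D})\subset q(\mathbb{D})=\mathbb{D}_M$, i.e.\ $|p(z)|<M$ in $\mathbb{D}$.

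For part (i), I would simply invoke Theorem \ref{ch3,th4.2} with $\Omega$ arbitrary and this choice of $q$: the hypothesis $\psi\in\Psi_{n,\beta}(\Omega,M,a)$ means by definition that $\psi\in\Psi_{n,\beta}(\Omega,q)$, and the admissibility condition \eqref{ch3,eq4.2} is exactly what Definition \ref{ch3,def4.1}(iii) unwinds to after substituting $r=q(\zeta)=Me^{i\theta}$, computing $\zeta q'(\zeta)/q(\zeta)=|M-\overline{a}e^{i\theta}|^2/(M^2-|a|^2)$ from the formula for $q'$, and reading off the lower bound $m\ge n+(M^2-|a|^2-M\beta)/(M^2-|a|^2+M\beta)$ from \eqref{ch3,eq3.6}. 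So (i) is immediate once these routine identifications are stated.

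For part (ii) I would take $\Omega=\Delta=\mathbb{D}_M$, so that $\psi\in\Psi_{n,\beta}(M,a)$ means $\psi\in\Psi_{n,\beta}(\Delta,M,a)$, and the subordination-type hypothesis $|\psi(p(z),zp'(z),z^2p''(z))|<M$ is precisely the statement $\psi(p(z),zp'(z),z^2p''(z))\in\Omega$. Applying part (i) with this $\Omega$ then yields $|p(z)|<M$. One small point worth checking is that the membership $\psi(a,0,0)\in\Omega$ and $(a,0,0)\in D$ — part (ii) of Definition \ref{ch3,def4.1}, which is folded into the definition of $\Psi_{n,\beta}(\Omega,M,a)$ — together with $(p(z),zp'(z),z^2p''(z))\in D$ are all available by hypothesis, so no extra verification is needed.

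There is no real obstacle here: the theorem is a translation of the abstract result into the concrete disk case, and the only thing to be careful about is bookkeeping — namely that $|q'(0)|=(M^2-|a|^2)/M$, that $\partial q(\mathbb{D})=q(\partial\mathbb{D})$ since $q$ is univalent on $\overline{\mathbb{D}}$ in this case (so the boundary behaviour needed in Theorem \ref{ch3,th4.2} is unproblematic), and that the equivalent reformulation \eqref{ch3,eq4.3} when $a=0$ is consistent with what one gets by setting $a=0$ in \eqref{ch3,eq4.2}. I would present the proof in two short paragraphs mirroring (i) and (ii), citing Theorem \ref{ch3,th4.2}, Lemma \ref{ch3,lem3.2}, and the explicit form of $q$ and $q'$ from Case 1, and leave the elementary simplification of the admissibility inequalities to the reader as the paper already displays them.
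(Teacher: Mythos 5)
Your proposal is correct and matches the paper's treatment: the paper gives no separate proof of Theorem \ref{ch3,th4.7}, presenting it as the direct specialization of Theorem \ref{ch3,th4.2} to $q(z)=M(Mz+a)/(M+\overline{a}z)$ with $|q'(0)|=(M^{2}-|a|^{2})/M$, the admissibility condition unwinding to \eqref{ch3,eq4.2}, and part (ii) obtained by taking $\Omega=\mathbb{D}_{M}$. Your bookkeeping (including $p\prec q\iff|p(z)|<M$) is exactly what the paper intends.
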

%
\noindent \textbf{Case 2.}  The half-plane $\Delta=\{w:\RE w>0\}$.
Here the function
\[q(z)=\frac{a+\overline{a}z}{1-z}\quad (z\in \mathbb{D}),\]
where $\RE a >0$, is univalent in
$\overline{\mathbb{D}}\setminus\{1\}$ and satisfies
$q(\mathbb{D})=\Delta$, $q(0)=a$ and $q \in Q$. Let
$\Psi_{n,\beta}(\Omega,a):=\Psi_{n,\beta}(\Omega,q)$ and when
$\Omega=\Delta$, denote the class by $\Psi_{n,\beta}(a)$. Comparing
with  Lemma 2.2f in \cite[p.\ 26]{monograph}, the condition of
$\beta$-admissibility becomes
\[\psi(r,s,t) \not\in \Omega \quad \mbox{whenever} \quad (r,s,t) \in D,\]
\begin{align*}
 r&=q(\zeta)=i\rho,\\
  s&=m\zeta q'(\zeta)=-\frac{m}{2}\frac{|a-i\rho|^{2}}{\RE a}, \quad \mbox{and}\\
  \RE \left(\frac{t}{s}+1\right) &\geq 0,
\end{align*}
where $\rho \in \mathbb{R}$ and
\[m \geq n+\frac{2\RE a-\beta }{2\RE a+\beta }.\]
Equivalently,
\begin{equation}\label{ch3,eq4.4}
\begin{split}
\psi(i\rho,\sigma,\mu+i\nu) &\not\in \Omega \quad \mbox{whenever} \quad (i\rho,\sigma,\mu+i\nu) \in D,\\
  \sigma &\leq-\frac{1}{2}\left( n+\frac{2\RE a-\beta }{2\RE a+\beta }\right)\frac{|a-i\rho|^{2}}{\RE a},
   \quad \mbox{and}\\
 \sigma+\mu &\leq 0,
\end{split}
\end{equation}
where $\rho,\sigma,\mu,\nu \in \mathbb{R}$ and $n \geq 1$. Thus, the
class $\Psi_{n,\beta}(\Omega,a)$ consists of those functions
$\psi:\mathbb{C}^3\rightarrow \mathbb{C}$ that are continuous in a
domain $D \subset \mathbb{C}^3$ with $(a,0,0) \in D$ and
$\psi(a,0,0) \in \Omega$,   satisfying the $\beta$-admissibility
condition \eqref{ch3,eq4.4}.

If $a=1$, then \eqref{ch3,eq4.4} simplifies to
\begin{equation}\label{ch3,eq4.5}
\begin{split}
\psi(i\rho,\sigma,\mu+i\nu) &\not\in \Omega \quad \mbox{whenever} \quad (i\rho,\sigma,\mu+i\nu) \in D,\\
                                \sigma &\leq-\frac{1}{2}\left( n+\frac{2-\beta }{2+\beta }\right)
                                 (1+\rho^{2}), \quad \mbox{and}\\
                             \sigma+\mu &\leq 0,
\end{split}
\end{equation}
where $\rho,\sigma,\mu,\nu \in \mathbb{R}$, and $n \geq 1$, a
condition much easier to check.

In this particular case, Theorem \ref{ch3,th4.2} can be rephrased in
the following form.
\begin{theorem}\label{ch3,th4.8}
Let $p \in \mathcal{H}_{\beta}[a,n]$ with $\RE a>0$, $0<\beta  \leq
2\RE a$.
\begin{enumerate}
  \item [(i)] Let $\psi \in \Psi_{n,\beta}(\Omega,a)$ with associated domain $D$. If $(p(z), zp'(z), z^{2}p''(z))\in {D}$  and
                 \[ \psi (p(z), zp'(z), z^{2}p''(z)) \in \Omega\quad ( z\in
 {\mathbb{D}}),  \]
             then $\RE p(z) >0.$
  \item [(ii)] Let $\psi \in \Psi_{n,\beta}(a)$ with associated domain $D$.
   If $(p(z), zp'(z), z^{2}p''(z))\in {D}$ and
                 \[\RE \psi (p(z), zp'(z), z^{2}p''(z))>0 \quad ( z\in
 {\mathbb{D}}),\]
              then
                 $\RE p(z)>0.$
\end{enumerate}
\end{theorem}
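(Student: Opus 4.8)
The plan is to derive Theorem~\ref{ch3,th4.8} as a direct specialization of Theorem~\ref{ch3,th4.2} to the half-plane case, exactly as the paper has already set up through equations \eqref{ch3,eq4.4} and \eqref{ch3,eq4.5}. For part~(i), I would first observe that with $q(z)=(a+\overline{a}z)/(1-z)$ one has $q(\mathbb{D})=\Delta=\{w:\RE w>0\}$, $q(0)=a$, and $q\in Q$, and a short computation gives $q'(z)=2\RE a/(1-z)^2$, so $|q'(0)|=2\RE a$. Thus the hypothesis $0<\beta\le 2\RE a$ is precisely $0<\beta\le|q'(0)|$, which is what Theorem~\ref{ch3,th4.2} requires. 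Hence if $\psi\in\Psi_{n,\beta}(\Omega,a)=\Psi_{n,\beta}(\Omega,q)$ with associated domain $D$, and $p\in\mathcal{H}_\beta[a,n]$ satisfies $(p(z),zp'(z),z^2p''(z))\in D$ together with $\psi(p(z),zp'(z),z^2p''(z))\in\Omega$ for all $z\in\mathbb{D}$, then Theorem~\ref{ch3,th4.2} yields $p\prec q$. Finally $p\prec q$ means $p(\mathbb{D})\subset q(\mathbb{D})=\{w:\RE w>0\}$, so $\RE p(z)>0$ on $\mathbb{D}$, which is the assertion.

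For part~(ii) I would take $\Omega=\Delta=\{w:\RE w>0\}$ in part~(i). Then $\psi\in\Psi_{n,\beta}(a)=\Psi_{n,\beta}(\Delta,a)$, the condition ``$\psi(p(z),zp'(z),z^2p''(z))\in\Omega$'' becomes ``$\RE\psi(p(z),zp'(z),z^2p''(z))>0$'', and the conclusion ``$\RE p(z)>0$'' follows immediately from part~(i). So part~(ii) is nothing more than the instance $\Omega=\Delta$ of part~(i), with the only thing to note being that $\psi(a,0,0)\in\Delta$ is guaranteed by the membership $\psi\in\Psi_{n,\beta}(\Delta,a)$ (it is built into the admissibility definition).

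The only genuine content beyond invoking Theorem~\ref{ch3,th4.2} is the bookkeeping already displayed before the theorem: verifying that the general $\beta$-admissibility condition of Definition~\ref{ch3,def4.1}, when $q$ is the half-plane map, takes the concrete shape \eqref{ch3,eq4.4}. This rests on computing, for $\zeta=e^{i\theta}$ with $q(\zeta)=i\rho$ finite, that $\zeta q'(\zeta)=-\tfrac12|a-i\rho|^2/\RE a$ and $\RE(1+\zeta q''(\zeta)/q'(\zeta))=0$, together with the bound $m\ge n+(2\RE a-\beta)/(2\RE a+\beta)$ coming from \eqref{ch3,eq3.6} with $|z_0|^n<1$ and the monotonicity of $r\mapsto(A-Br^n)/(A+Br^n)$. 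None of this is hard; it is the same computation as in Lemma~2.2f of \cite{monograph} adjusted by the fixed-coefficient factor, and it has in fact been carried out in the text preceding the theorem. So there is no real obstacle: the proof is a two-line deduction, and I would simply write that part~(i) is Theorem~\ref{ch3,th4.2} applied with this particular $q$ (noting $|q'(0)|=2\RE a$), and part~(ii) is the case $\Omega=\Delta$ of part~(i).
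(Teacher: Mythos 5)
Your proposal is correct and coincides with the paper's treatment: the paper gives no separate proof but presents Theorem \ref{ch3,th4.8} precisely as the specialization of Theorem \ref{ch3,th4.2} to the half-plane map $q(z)=(a+\overline{a}z)/(1-z)$ (with $|q'(0)|=2\RE a$), the admissibility bookkeeping having already been recorded in \eqref{ch3,eq4.4}, and part (ii) being the case $\Omega=\Delta$. Nothing further is needed.
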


\section{Examples}\label{ch3,sec5}\noindent
In this section,  examples of differential inequalities and subordinations are presented  to obtain several interesting results. These are
applications of $\beta$-admissible functions $\psi$ in $\Psi_{n,\beta}(\Omega,q)$, by  judicious choices  of $\psi$. For the sake of comparison,
we shall look at several examples that were considered by Miller and Mocanu in \cite[ pp.\ 36--42]{monograph}.

The first example involves a disk of radius $M$ and is an
application of Theorem~\ref{ch3,th4.7}.

\begin{example}\label{ch3,ex5.1}
Let $\psi(r,s,t)=r+s+t$, $a=0$, and $\Omega=h(\mathbb{D})$, where
\[h(z)=\left(\left(n+\frac{M-\beta }{M+\beta }\right)^{2}+1\right)Mz.\]To apply Theorem \ref{ch3,th4.7}, we need to show that $\psi \in
\Psi_{n,\beta}(\Omega,M,0)$ for $n \geq 1$ and $\beta  \leq M$. The
function $\psi$ evidently satisfies the first two admissibility
conditions: $\psi$ is continuous in the domain $D=\mathbb{C}^{3}$,
$(0,0,0) \in D$ and $\psi(0,0,0)=0 \in \Omega$. It remains to show
the $\beta$-admissibility condition \eqref{ch3,eq4.3} is satisfied.
Since
\[\psi(Me^{i\theta},Ke^{i\theta},L)=Me^{i\theta}+Ke^{i\theta}+L,\]
then
\begin{align*}
    |\psi(Me^{i\theta},Ke^{i\theta},L)|&=|M+K+Le^{-i\theta}|\\
&\geq M+K+\RE(Le^{-i\theta})\\
&\geq M+K+\left(n-\frac{2\beta }{M+\beta }\right)K\\
&=M+\left(n+\frac{M-\beta }{M+\beta }\right)K\\
& \geq M+\left(n+\frac{M-\beta }{M+\beta }\right)^{2}M\\
&= \left(\left(n+\frac{M-\beta }{M+\beta }\right)^{2}+1\right)M,
\end{align*}
whenever $K \geq (n+(M-\beta )/(M+\beta ))M$, $\RE (Le^{-i\theta})
\geq \big(n-2\beta /(M+\beta )\big)K$, $\theta \in \mathbb{R}$, and
$n \geq 1$. Thus, $\psi \in \Psi_{n,\beta}(\Omega,M,0)$ for $n \geq
1$.
Theorem \ref{ch3,th4.7}  now yields the following differential subordination result:\\

\emph{Let $p \in \mathcal{H}_{\beta}[0,n]$ with $0<\beta  \leq M$.
If
\[|p(z)+zp'(z)+z^{2}p''(z)|
<\left(\left(n+\frac{M-\beta }{M+\beta }\right)^{2}+1\right)M
\quad(z \in \mathbb{D}),\] then $|p(z)|<M.$}
\end{example}

\begin{remark}\label{ch3,rem5.2}
If $\beta =M$, then Example  \ref{ch3,ex5.1} reduces to Example 2.4a
in \cite[p.\ 36]{monograph}. Since \[(n^{2}+1)M \leq
\left(\left(n+\frac{M-\beta }{M+\beta }\right)^{2}+1\right)M \quad
\text{ for\ } \beta \leq M,\] it is clear that Example
\ref{ch3,ex5.1} extends  Example 2.4a in \cite[p.\ 36]{monograph}
for functions $p \in \mathcal{H}_{\beta}[0,n]$.
\end{remark}

The next example involves a function  with positive real part and is
an application of Theorem \ref{ch3,th4.8}.
\begin{example}\label{ch3,ex5.3}
Let $\psi(r,s,t)=1-r^{2}+5s+t$, $a=1$ and
\[\Omega=\left\{w:|w|<\frac{6-\beta }{2+\beta }\right\}.\]
Now Theorem \ref{ch3,th4.8} is applicable provided $\psi \in \Psi_{n,\beta}(\Omega,1)$ for $n \geq 1$ and $\beta  \leq 2$. The function $\psi$
 is continuous in the domain $D=\mathbb{C}^{3}$, $(1,0,0) \in D$ and $\psi(1,0,0) \in \Omega$.
To show that the $\beta$-admissibility condition \eqref{ch3,eq4.5}
is satisfied, consider
  $\psi(i\rho,\sigma,\mu+i\nu)=1+\rho^{2}+5\sigma+\mu+i\nu$.
Then
  \begin{align*}
   |\psi(i\rho,\sigma,\mu+i\nu)| &=|1+\rho^{2}+5\sigma+\mu+i\nu|\\
                                 &\geq -(1+\rho^{2}+5\sigma+\mu)\\
                                 & \geq -(1+\rho^{2})-4\sigma\\
                                 & \geq -(1+\rho^{2})+2\left(n+\frac{2-\beta }{2+\beta }\right)(1+\rho^{2})\\
                                 & \geq 2\left(n+\frac{2-\beta }{2+\beta }\right)-1\\
                                 & \geq 2\left(1+\frac{2-\beta }{2+\beta }\right)-1\\
                                 &=\frac{6-\beta }{2+\beta }
    \end{align*}
whenever $\rho \in \mathbb{R}$, $\sigma \leq -\left(n+(2-\beta
)/(2+|\beta)\right)(1+\rho^{2})/2$, $\sigma+\mu \leq 0$ and $n \geq
1$. Thus,
$\psi \in \Psi_{n,\beta}(\Omega,1)$, and Theorem \ref{ch3,th4.8} yields the following differential inequality result:\\

\emph{Let $p \in \mathcal{H}_{\beta}[1,n]$, and $0<\beta  \leq 2$.
If
\[|z^{2}p''(z)+5zp'(z)-p^{2}(z)+1|<\frac{6-\beta }{2+\beta } \quad
  \quad ( z \in \mathbb{D}),\]
then $\RE p(z) >0.$}
\end{example}

\begin{remark}\label{ch3,rem5.4}
If $\beta =2$, then Example \ref{ch3,ex5.3} reduces to Example 2.4i
in \cite[p.\ 41]{monograph}.  Since
\[\frac{6-\beta }{2+\beta }\geq 1 \quad  \text{\ for }\ \beta \leq2,\]
Example \ref{ch3,ex5.3} extends  Example 2.4i in \cite[p.\
41]{monograph}.
\end{remark}

The next two examples illustrate the sensitivity of the class of
$\beta$-admissible functions to the value $n$.
\begin{example}\label{ch3,ex5.5}
Let $\psi(r,s,t)=r+s+1-r^{2}$, $\Omega=\Delta=\{w:\RE w>0\}$ and
$a=1$. We first show that $\psi \in \Psi_{n,\beta}(1)$ for $n \geq
(2+3\beta )/(2+\beta )$ and $\beta  \leq 2$. The function
  $\psi$ is continuous in the domain $D=\mathbb{C}^{3}$,
  $(1,0,0) \in D$, and $\RE \psi(1,0,0)=1>0$ so that $\psi(1,0,0) \in \Omega.$
To verify  the $\beta$-admissibility condition \eqref{ch3,eq4.5} is
satisfied, consider

  \[\psi(i\rho, \sigma,\mu+i\nu)=i\rho+\sigma+1+\rho^{2}.\]
Then
  \begin{align*}
    \RE \psi(i\rho, \sigma,\mu+i\nu)&=\sigma+1+\rho^{2}\\
                                    &\leq -\frac{1}{2}\left( n+\frac{2-\beta }{2+\beta }\right)(1+\rho^{2})+1+\rho^{2}\\
                                    &=(1+\rho^{2})\left[-\frac{n}{2}+\frac{2+3\beta }{2(2+\beta )}\right]\\
                                    &=\frac{1}{2}(1+\rho^{2})\left[\frac{2+3\beta }{2+\beta }-n\right] \leq 0,
  \end{align*}
whenever $\rho \in \mathbb{R}$, $\sigma \leq -[ n+(2-\beta )/(2+\beta )](1+\rho^{2})/2$, and $n \geq (2+3\beta )/(2+\beta )$.\\
Thus, $\psi \in \Psi_{n,\beta}(1)$ for $n  \geq (2+3\beta )/(2+\beta
)$ and $\beta  \leq 2$. Therefore, by Theorem \ref{ch3,th4.8}, the
following differential inequality
result is obtained:\\

\emph{Let $p \in \mathcal{H}_{\beta}[1,n]$ with  $0<\beta  \leq 2$
and $n \geq (2+3\beta )/(2+\beta )$. If
\[\RE \left(p(z)+zp'(z)+1-p^{2}(z)\right) >0 \quad ( z\in
 {\mathbb{D}}),\]
then $\RE p(z) >0.$}
\end{example}

\begin{remark}\label{ch3,rem5.6}
If $\beta =2$, then \[\frac{2+3\beta }{2+\beta }=2\] so that Example
\ref{ch3,ex5.5} reduces to Example~2.4g in \cite[ p.\
40]{monograph}. For $\beta  \leq 2$, then \[\frac{2+3\beta }{2+\beta
} \leq 2,\] and  Example \ref{ch3,ex5.5} extends Example 2.4g in
\cite[ p.\ 40]{monograph}.
\end{remark}

 The next example has a similar restriction.

\begin{example}\label{ch3,ex5.7}
Let $\psi(r,s,t)=2-r^{2}+3s+t$, $\Omega=\Delta=\{w:\RE w>0\}$ and
$a=1$.  The function $\psi$ is continuous in the domain
$D=\mathbb{C}^{3}$,  $(1,0,0) \in D$, and $\RE \psi(1,0,0)=1>0$ so
that
  $\psi(1,0,0) \in \Omega.$
 To show the $\beta$-admissibility condition \eqref{ch3,eq4.5} is satisfied, consider

  \[\psi(i\rho, \sigma,\mu+i\nu)=2+\rho^{2}+3\sigma+\mu+i\nu.\]
Then
  \begin{align*}
    \RE \psi(i\rho, \sigma,\mu+i\nu)&=2+\rho^{2}+2\sigma+(\sigma+\mu)\\
                                    &\leq 2+\rho^{2}+2\sigma\\
                                    &=\left(2-n-\frac{2-\beta }{2+\beta }\right)+\left(1-n-\frac{2-\beta }{2+\beta }\right)\rho^{2}\\
                                    &=\left(\frac{2+3\beta }{2+\beta }-n\right)+\left(\frac{2\beta }{2+\beta }-n\right)\rho^{2} \leq 0,
  \end{align*}
whenever $\rho \in \mathbb{R}$,  $\sigma \leq -\left( n+(2-\beta
)/(2+\beta )\right)(1+\rho^{2})/2$, $\sigma+\mu \leq 0$ and $n \geq
(2+3\beta )/(2+\beta )$.  Thus, $\psi \in \Psi_{n,\beta}(1)$ for $n
\geq (2+3\beta )/(2+\beta )$ and $\beta  \leq 2$. Therefore, Theorem
\ref{ch3,th4.8} yields the following differential inequality result:\\

\emph{Let $p \in \mathcal{H}_{\beta}[1,n]$, $0<\beta  \leq 2$ and $n
\geq (2+3\beta )/(2+\beta )$. If
\[\RE \left(z^{2}p''(z)+3zp'(z)-p^{2}(z)+2\right) >0 \quad ( z\in
 {\mathbb{D}}),\]
then $\RE p(z) >0.$}
\end{example}

\begin{remark}\label{ch3,rem5.8}
If $\beta =2$, then $n \geq (2+3\beta )/(2+\beta )=2$ so that
Example \ref{ch3,ex5.7} extends  Example 2.4h in \cite[p.\
40]{monograph}.
\end{remark}

\begin{example}\label{ch3,ex5.9}
In this example, consider the class $\Psi_{2,\beta}(\Omega,q)$, where $q(z)=1+z$, $\beta  \leq 1$, and
\[\Omega=\left\{w: |w| <\frac{4(3+2\beta )}{(1+\beta )^{2}}\right\}.\]
The function $\psi(r,s,t)=1-r^{2}+3s+t$  is continuous in the domain
$D=\mathbb{C}^{3}$, $(1,0,0) \in D$ and $\psi(1,0,0)\in \Omega$. It
remains to show that the $\beta$-admissibility condition is
satisfied. If we set $r_0=q(\zeta)$, $s_0=m \zeta  q'(\zeta)$ and
  \[ \RE \left( \frac{t_0}{s_0}+1\right) \geq m \RE \left(\zeta \frac{q''(\zeta)}{q'(\zeta)}+1\right),\]
where $|\zeta|=1$ and $m \geq 2+(1-\beta )/(1+\beta )$, then
  \[r_{0}=1+\zeta, \quad s_{0}=m\zeta, \quad \mbox{and}
  \quad \RE(t_{0}\overline{\zeta}) \geq m(m-1).\]
Since
  $\psi(r_{0},s_{0},t_{0})=(3m-2)\zeta-\zeta^{2}+t_{0}$, it follows
  that
  \begin{align*}
    |\psi(r_{0},s_{0},t_{0})|&=|3m-2-\zeta+t_{0}\overline{\zeta}|\\
                            &\geq 3m-2-\RE \zeta+\RE (t_{0}\overline{\zeta})\\
                            & \geq 3m-3+m(m-1)\\
                            &=(m+3)(m-1)\\
                            &\geq \left(5+\frac{1-\beta }{1+\beta }\right)\left(1+\frac{1-\beta }{1+\beta }\right)\\
                            &=\frac{4(3+2\beta )}{(1+\beta )^{2}}.
  \end{align*}
Thus
 $\psi(r_{0},s_{0},t_{0}) \not\in \Omega,$ and the $\beta$-admissibility condition is satisfied, that is,   $\psi \in
\Psi_{2,\beta}(\Omega,q)$. Theorem \ref{ch3,th4.2}  now yields the following :\\

\emph{Let $p \in \mathcal{H}_{\beta}[1,2]$ and $0<\beta  \leq 1$. If
\[|z^{2}p''(z)+3zp'(z)-p^{2}(z)+1| <\frac{4(3+2\beta )}{(1+\beta )^{2}} \quad ( z\in
 {\mathbb{D}}),\]
then
\[|p(z)-1| <1 \quad (z \in \mathbb{D}).\]}
\end{example}

\begin{remark}\label{ch3,rem5.10}
If $\beta =1$, then
\[\frac{4(3+2\beta )}{(1+\beta )^{2}}=5\]
so that Example \ref{ch3,ex5.9} extends Example 2.4k in \cite[p.\
42]{monograph}. The assumption $p \in \mathcal{H}_{\beta}[1,2]$ for
$\beta \leq 1$ implies that
\[\frac{4(3+2\beta )}{(1+\beta )^{2}} \geq 5.\]
\end{remark}

\section{Applications in univalent function theory}\label{ch3,sec6}
\noindent This section looks at several interesting applications of
the theory developed in the earlier sections to normalized convex
and starlike univalent functions with fixed second coefficient. The
results obtained here extend those given in Section 2.6 of  \cite[
p.\ 56]{monograph}.

 Let $\mathcal{A}_n$ be the class consisting of analytic functions $f$ defined in
$\mathbb{D}$ of the form
$f(z)=z+a_{n+1}z^{n+1}+a_{n+2}z^{n+2}+\cdots$, and
$\mathcal{A}:=\mathcal{A}_1$. The class $\mathcal{S}^*(\alpha)$ of
starlike functions of order $\alpha$, $0 \leq \alpha <1$, consists
of functions $f\in \mathcal{A}$ satisfying the inequality
\[ \RE\left(\frac{zf'(z)}{f(z)}\right)>\alpha \quad ( z\in
 {\mathbb{D}}). \] Similarly,  the class
$\mathcal{C}(\alpha)$ of convex functions of order $\alpha$, $0 \leq
\alpha <1$, consists of functions $f\in \mathcal{A}$ satisfying the
inequality
\[ \RE\left(1+\frac{zf''(z)}{f'(z)}\right)>\alpha \quad ( z\in
 {\mathbb{D}}). \]
When $\alpha=0$, these classes are respectively denoted by
$\mathcal{S}^*$ and $\mathcal{C}$. Let $\mathcal{A}_{n,b}$ denote
the class of functions $f \in \A_{n}$ of  the form
\[f(z)=z+bz^{n+1}+a_{n+2}z^{n+2}+\cdots,\]
with fixed coefficient $a_{n+1}=b$. We write $\mathcal{A}_{1,b}$ as
$\mathcal{A}_{b}$.

\begin{theorem}\label{ch3,th6.1}
If $f (z)=z+a_2z^2+\ldots\in \mathcal{C}$, then
$f\in\mathcal{S}^*(\alpha)$ where $\alpha$ is the smallest positive
root of the equation
\begin{equation}\label{ch3,eq6.1}
2 \alpha^{3}-\alpha^{2}|a_2|-4\alpha+2=0
\end{equation}
in the interval $[1/2,2/3]$.
\end{theorem}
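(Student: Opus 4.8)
The plan is to read Theorem~\ref{ch3,th6.1} as a second-order differential subordination for $p(z):=zf'(z)/f(z)$ and to apply Theorem~\ref{ch3,th4.2} with a half-plane dominant; the polynomial in \eqref{ch3,eq6.1} will surface as the precise threshold for $\beta$-admissibility. Since $\mathcal C$ and $\mathcal S^*(\alpha)$ are both preserved under the rotation $f(z)\mapsto e^{-i\theta}f(e^{i\theta}z)$, I may assume $a_2=|a_2|=:\beta\ge 0$; the case $\beta=0$ is the classical $n=2$ situation ($\Psi_{1,0}\equiv\Psi_2$), so take $\beta>0$. Because $f$ is convex, $f'$ and $f(z)/z$ are nonvanishing on $\mathbb D$, so $p$ is analytic with $p(z)\ne 0$, and from $f(z)=z+\beta z^{2}+\cdots$ one gets $p\in\mathcal H_\beta[1,1]$. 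Logarithmic differentiation yields
\[
1+\frac{zf''(z)}{f'(z)}=p(z)+\frac{zp'(z)}{p(z)},
\]
so $f\in\mathcal C$ says exactly that $\psi\bigl(p(z),zp'(z),z^{2}p''(z)\bigr)\in\Omega$ for every $z\in\mathbb D$, where $\psi(r,s,t):=r+s/r$ (which does not involve $t$) and $\Omega=\{w:\RE w>0\}$. Let $\alpha$ be the root in the statement, set $\gamma:=1-\alpha$, and let $q(z)=\bigl(1-(2\alpha-1)z\bigr)/(1-z)\in Q$ be the conformal map of $\mathbb D$ onto $\{w:\RE w>\alpha\}$ with $q(0)=1$, so that $q'(z)=2\gamma/(1-z)^{2}$ and $|q'(0)|=2\gamma$.

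I would then verify $\psi\in\Psi_{1,\beta}(\Omega,q)$. Conditions (i)--(ii) of Definition~\ref{ch3,def4.1} are immediate: $\psi$ is continuous on $D=\{(r,s,t):r\ne 0\}$, $(1,0,0)\in D$, and $\psi(1,0,0)=1\in\Omega$. For (iii), a boundary point of $q(\mathbb D)$ has the form $r_0=q(\zeta)=\alpha+i\rho$ with $\rho\in\mathbb R$; solving $q(\zeta)=\alpha+i\rho$ gives $\zeta=-(\gamma-i\rho)/(\gamma+i\rho)$, and the ensuing computation yields $\zeta q'(\zeta)=-(\gamma^{2}+\rho^{2})/(2\gamma)$, hence $s_0=m\,\zeta q'(\zeta)=-m(\gamma^{2}+\rho^{2})/(2\gamma)$. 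Since $\psi$ ignores $t_0$,
\[
\RE\,\psi(r_0,s_0,t_0)=\RE\left(r_0+\frac{s_0}{r_0}\right)=\alpha-\frac{m\,\alpha\,(\gamma^{2}+\rho^{2})}{2\gamma\,(\alpha^{2}+\rho^{2})},
\]
a decreasing function of $m$.

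The admissible $m$ satisfy $m\ge m_0:=1+\frac{|q'(0)|-\beta}{|q'(0)|+\beta}=\frac{4\gamma}{2\gamma+\beta}$, so $\RE\psi(r_0,s_0,t_0)\le 0$ holds for every admissible $m$ exactly when it holds at $m=m_0$, i.e. when $2\gamma(\alpha^{2}+\rho^{2})\le m_0(\gamma^{2}+\rho^{2})$ for all $\rho\in\mathbb R$. As $m_0\ge 2\gamma$ (equivalently $2\alpha\ge\beta$, true since $\alpha\ge 1/2\ge\beta/2$), this inequality, being affine in $\rho^{2}$ with nonnegative slope, is extremal at $\rho=0$, where it reads $m_0\gamma\ge 2\alpha^{2}$, that is $2\gamma^{2}\ge\alpha^{2}(2\gamma+\beta)$; substituting $\gamma=1-\alpha$ and simplifying, this is precisely $2\alpha^{3}-\beta\alpha^{2}-4\alpha+2\ge 0$. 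The polynomial $F(\alpha):=2\alpha^{3}-\beta\alpha^{2}-4\alpha+2$ has $F(0)=2>0$, is strictly decreasing on $[0,2/3]$ (since $F'<0$ there), and satisfies $F(1/2)=(1-\beta)/4\ge 0>F(2/3)=-2/27-4\beta/9$; hence its smallest positive root $\alpha$ lies in $[1/2,2/3]$, $F\ge 0$ on $[0,\alpha]$, and at that root equality holds, so $\psi\in\Psi_{1,\beta}(\Omega,q)$. The standing hypothesis $\beta\le|q'(0)|=2\gamma$ is also met: $F(1-\beta/2)$ factors as $2(2u-1)(u-1)(u+1)$ with $u=1-\beta/2\in[1/2,1)$, hence $F(1-\beta/2)\le 0$, forcing $\alpha\le 1-\beta/2$.

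Finally, $p\in\mathcal H_\beta[1,1]$ satisfies $\bigl(p(z),zp'(z),z^{2}p''(z)\bigr)\in D$ (because $p(z)\ne 0$) together with \eqref{ch3,eq4.1}, so Theorem~\ref{ch3,th4.2} gives $p\prec q$; therefore $p(\mathbb D)\subset q(\mathbb D)=\{w:\RE w>\alpha\}$, i.e. $\RE\bigl(zf'(z)/f(z)\bigr)>\alpha$ on $\mathbb D$, and $f\in\mathcal S^*(\alpha)$. The main obstacle is the boundary computation of $\zeta q'(\zeta)$ and the bookkeeping that isolates the extremal configuration $m=m_0$, $\rho=0$, from which \eqref{ch3,eq6.1} emerges; the remaining steps are routine. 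Alternatively, one may argue by contradiction directly from Lemma~\ref{ch3,lem3.5}, applied at a first point where $\RE p$ attains the value $\alpha$, carrying out the same estimate.
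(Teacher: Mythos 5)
Your proposal is correct and follows essentially the same route as the paper: the paper works with the shifted function $p(z)=zf'(z)/f(z)-\alpha\in\mathcal H_{b}[1-\alpha,1]$ and checks the specialized half-plane admissibility condition \eqref{ch3,eq4.4} (Theorem \ref{ch3,th4.8}(ii)), which is exactly your verification for the unshifted $p=zf'/f$ against the dominant $q$ of $\{\RE w>\alpha\}$ via Definition \ref{ch3,def4.1} and Theorem \ref{ch3,th4.2}; the extremal reduction to $m=m_{0}$, $\rho=0$ and the emergence of the cubic \eqref{ch3,eq6.1} coincide with the paper's monotonicity argument for $h(t)=((1-\alpha)^{2}+t)/(\alpha^{2}+t)$. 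A small plus on your side: you explicitly check the standing hypothesis $\beta\leq|q'(0)|=2(1-\alpha)$ (via the factorization of $F(1-\beta/2)$) and the rotation reduction to $a_{2}\geq0$, both of which the paper leaves implicit.
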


\begin{proof}The function $g$ defined by
\[g(\alpha)=2 \alpha^{3}-\alpha^{2}|a_2|-4\alpha+2\]
is continuous in $[1/2,2/3]$. Let $b:=a_2$. Since $f\in\mathcal{C}$,
$|b|\leq1$ and $g$ satisfies
\begin{align*}
    g\left(\frac{1}{2}\right)=\frac{1}{4}(1-|b|) \geq 0,\quad
    \mbox{and}\quad
    g\left(\frac{2}{3}\right)=-\frac{2}{27}(1+6|b|) \leq 0.
\end{align*}
 Therefore  there
exists a root of $g(\alpha)=0$ in $[1/2,2/3]$. Define the function
$p:\mathbb{D} \rightarrow  \mathbb{C}$ by
\begin{equation}\label{ch3,eq6.2}
p(z):=\frac{zf'(z)}{f(z)}-\alpha,
\end{equation}
where $\alpha$ is the smallest positive root of \eqref{ch3,eq6.1}. Since   $f$ is a convex univalent function in $\mathcal{A}_b$, the function
\[p(z)=(1-\alpha)+bz+\cdots\]
is analytic in $\mathbb{D}$. Thus $p \in
\mathcal{H}_{b}[1-\alpha,1]$, and  $\alpha\leq2/3<1$ readily yields
\[ p(0)=1-\alpha >0.\]
From \eqref{ch3,eq6.2}, it follows that
\[\frac{zf'(z)}{f(z)}=p(z)+\alpha\]
so that
\begin{equation}\label{ch3,eq6.3}
\frac{zf''(z)}{f'(z)}+1=p(z)+\alpha+\frac{zp'(z)}{p(z)+\alpha}=\psi(p(z),zp'(z)),
\end{equation}
where
\[\psi(r,s)=r+\alpha+\frac{s}{r+\alpha}.\]

The function $\psi$  is continuous in the domain
$D=(\mathbb{C}\setminus \{-\alpha\})\times \mathbb{C}$,
$(1-\alpha,0) \in D$, and
  \[\RE \psi(1-\alpha,0)=1 >0.\]
We now show that the $\beta$-admissibility condition
\eqref{ch3,eq4.4} is satisfied.
  Since
  \[\psi(i\rho,\sigma)=i \rho+\alpha+\frac{\sigma}{\alpha^{2}+\rho^{2}}(\alpha-i\rho),\]
it follows that
  \begin{align*}
  \RE \psi(i\rho,\sigma)&=\alpha+\frac{\alpha \sigma}{\alpha^{2}+\rho^{2}}\\
                        & \leq \alpha-\frac{1}{2}\frac{\alpha}{\alpha^{2}+\rho^{2}}\left(1+\frac{2(1-\alpha)-|b|}{2(1-\alpha)+|b|}\right)\frac{(1-\alpha)^{2}+\rho^{2}}{1-\alpha}\\
                        &=\alpha-\frac{2\alpha}{2(1-\alpha)+|b|}\frac{(1-\alpha)^{2}+\rho^{2}}{\alpha^{2}+\rho^{2}}.
  \end{align*}
Since the function $h$ given by
  \[h(t)=\frac{(1-\alpha)^{2}+t}{\alpha^{2}+t},\quad t \geq0,\]
is an increasing function of $t$ for $\alpha\geq1/2$, clearly
  \[\frac{(1-\alpha)^{2}+\rho^{2}}{\alpha^{2}+\rho^{2}} \geq \frac{(1-\alpha)^{2}}{\alpha^{2}}.\]
  Thus, it follows from \eqref{ch3,eq6.1} that
  \begin{align*}
  \RE \psi(i\rho,\sigma)&\leq \alpha-\frac{2\alpha}{2(1-\alpha)+|b|}\frac{(1-\alpha)^{2}}{\alpha^{2}}\\
                        &=\frac{\alpha^{2}|b|-2\alpha^{3}+4\alpha-2}{\alpha[2(1-\alpha)+|b|]}=0,
  \end{align*}
whenever $\rho \in \mathbb{R}$ and
\[\sigma \leq -\frac{1}{2}\left(1+\frac{2\RE p(0)-|b|}{2\RE p(0)+|b|}\right)
\frac{|p(0)-i\rho|^{2}}{\RE p(0)},\quad p(0)=1-\alpha.\] Thus $\psi
\in \Psi_{b}(1-\alpha)$. The hypothesis and \eqref{ch3,eq6.3} give
\[\RE \psi(p(z),zp'(z)) >0 \quad ( z \in \mathbb{D}).\]
Therefore,  Theorem \ref{ch3,th4.8} (ii) shows that $p$ satisfies
$\RE p(z) >0$, and thus  $f\in \mathcal{S}^*(\alpha)$.
\end{proof}

\begin{remark}\label{ch3,rem6.2}
The roots of \eqref{ch3,eq6.1} in $[1/2,2/3]$ are decreasing as a
function of $|a_2|$, $|a_2|\in(0,1]$. If $|b|=|a_2|=1$, then
\eqref{ch3,eq6.1} becomes
\[2\alpha^{3}-\alpha^{2}-4\alpha+2=(\alpha^{2}-2)(2\alpha-1)=0.\]
Thus  $\alpha=1/2$. Therefore, Theorem \ref{ch3,th6.1} reduces to
Theorem 2.6a in  \cite[p.\ 57]{monograph} in this case, and provides
an improvement of the Marx-Strohh\"{a}cker's result that $f \in
\mathcal{C}$ implies $f \in \mathcal{S}^{*}(1/2)$.
\end{remark}

 \begin{theorem}\label{ch3,th6.3}
If $f =z+a_2z^2+\ldots\in \mathcal{C}$, then\[  \RE
\sqrt{f'(z)}>\alpha,\] where $\alpha$ is given by
\begin{equation}\label{ch3,eq6.4}
\alpha=\frac{10+|a_2|-\sqrt{|a_2|^{2}+20|a_2|+4}}{12}.
\end{equation}
\end{theorem}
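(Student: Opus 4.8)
The plan is to follow the template of the proof of Theorem \ref{ch3,th6.1}, exhibiting $\sqrt{f'}$ as a function with positive real part by means of a suitable $\beta$-admissible $\psi$ in the half-plane case (Case 2, $\Omega=\Delta=\{w:\RE w>0\}$). First I would record that $\alpha$ as given by \eqref{ch3,eq6.4} is the smaller root of the quadratic $6\alpha^{2}-(10+|a_2|)\alpha+4=0$, and that this root lies in $[1/2,2/3]$: at $|a_2|=1$ it equals $1/2$, at $|a_2|=0$ it equals $2/3$, and it depends monotonically on $|a_2|$. The bound $\alpha\geq 1/2$ is exactly what is needed later to control a rational function of $\rho^{2}$, just as in Theorem \ref{ch3,th6.1}.

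Next, put $b:=a_2$ and define $p(z):=\sqrt{f'(z)}-\alpha$, taking the branch with $\sqrt{f'(0)}=1$. Since $f\in\mathcal{A}_b$ is convex (so $f'\neq 0$ in $\mathbb{D}$), $p$ is analytic with $p(z)=(1-\alpha)+bz+\cdots$; hence $p\in\mathcal{H}_{b}[1-\alpha,1]$ and $p(0)=1-\alpha>0$ because $\alpha\leq 2/3<1$. From $f'(z)=(p(z)+\alpha)^{2}$ one differentiates to get $f''(z)=2(p(z)+\alpha)p'(z)$, so that
\[ 1+\frac{zf''(z)}{f'(z)}=1+\frac{2zp'(z)}{p(z)+\alpha}=\psi\big(p(z),zp'(z)\big),\qquad \psi(r,s):=1+\frac{2s}{r+\alpha}. \]
Convexity of $f$ then gives $\RE\psi(p(z),zp'(z))>0$ for $z\in\mathbb{D}$.

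The heart of the argument is verifying $\psi\in\Psi_{b}(1-\alpha)$, i.e. the $\beta$-admissibility condition \eqref{ch3,eq4.4} with $a=1-\alpha$, $n=1$, $\beta=|b|$. The elementary part is immediate: $\psi$ is continuous on $D=(\mathbb{C}\setminus\{-\alpha\})\times\mathbb{C}$, $(1-\alpha,0)\in D$, and $\RE\psi(1-\alpha,0)=1>0$; moreover $\psi$ does not involve the third variable, so only the constraint on $\sigma$ is relevant. For the admissibility inequality one computes $\psi(i\rho,\sigma)=1+2\sigma(\alpha-i\rho)/(\alpha^{2}+\rho^{2})$, hence $\RE\psi(i\rho,\sigma)=1+2\alpha\sigma/(\alpha^{2}+\rho^{2})$; substituting the bound
\[\sigma\leq-\tfrac12\Big(1+\tfrac{2(1-\alpha)-|b|}{2(1-\alpha)+|b|}\Big)\tfrac{(1-\alpha)^{2}+\rho^{2}}{1-\alpha}=-\frac{2\big((1-\alpha)^{2}+\rho^{2}\big)}{2(1-\alpha)+|b|}\]
and then using that $t\mapsto\frac{(1-\alpha)^{2}+t}{\alpha^{2}+t}$ is increasing for $\alpha\geq1/2$ (so the ratio is minimized at $\rho=0$, with value $(1-\alpha)^{2}/\alpha^{2}$) gives
\[ \RE\psi(i\rho,\sigma)\leq 1-\frac{4(1-\alpha)^{2}}{\alpha\,(2(1-\alpha)+|b|)}=\frac{-\big(6\alpha^{2}-(10+|b|)\alpha+4\big)}{\alpha\,(2(1-\alpha)+|b|)}=0, \]
the final equality being precisely the defining relation of $\alpha$. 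Thus $\psi(i\rho,\sigma)\notin\Omega$, so $\psi\in\Psi_{b}(1-\alpha)$.

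Finally, Theorem \ref{ch3,th4.8}(ii) applies — one should also note $0<|b|\leq 2(1-\alpha)$, which follows from $|a_2|\leq 1$ by squaring — and yields $\RE p(z)>0$, i.e. $\RE\sqrt{f'(z)}>\alpha$. I expect the only delicate step to be the terminal algebraic identity: the admissibility estimate must be arranged so that the leftover quadratic in $\alpha$ is exactly $6\alpha^{2}-(10+|a_2|)\alpha+4$, which then vanishes by the choice \eqref{ch3,eq6.4}; together with checking that the rational function is monotone in the direction that forces $\alpha\geq1/2$, this is the whole subtlety, the remaining computations being routine.
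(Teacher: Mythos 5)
Your proposal is correct and follows essentially the same route as the paper's own proof: the same substitution $p(z)=\sqrt{f'(z)}-\alpha$, the same $\psi(r,s)=1+2s/(r+\alpha)$, the same admissibility verification via the monotonicity of $t\mapsto\bigl((1-\alpha)^{2}+t\bigr)/(\alpha^{2}+t)$ for $\alpha\geq 1/2$, the same terminal identity $6\alpha^{2}-(10+|a_2|)\alpha+4=0$, and the same appeal to Theorem \ref{ch3,th4.8}(ii). Your added checks (branch choice for $\sqrt{f'}$, the verification $|a_2|\leq 2(1-\alpha)$ needed for $0<\beta\leq 2\RE a$, and locating $\alpha\in[1/2,2/3]$ via the endpoint values rather than the paper's contradiction argument) are harmless refinements of details the paper leaves implicit.
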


\begin{proof}
Let $b=:a_2$. First note that $\alpha$ given by \eqref{ch3,eq6.4} satisfies the equation
\begin{equation}\label{ch3,eq6.5}
6\alpha^{2}-(10+|b|)\alpha+4=0.
\end{equation}
If $\alpha <1/2$, then
\[10+|b|-\sqrt{|b|^{2}+20|b|+4}<6\]
and leads to $|b|>1$, which  contradicts $f$ being convex. Similarly, if $\alpha >2/3$, then $|b|<0$ which is again a contradiction. Thus
$\alpha \in [1/2,2/3]$.

Define the function $p:\mathbb{D} \rightarrow  \mathbb{C}$ by
\begin{equation}\label{ch3,eq6.6}
p(z):=\sqrt{f'(z)}-\alpha.
\end{equation}
Since $f \in \A_{b}$ and $f$ is convex univalent, the function
\[p(z)=(1-\alpha)+bz+\cdots\]
is analytic in $\mathbb{D}$. Thus $p \in
\mathcal{H}_{b}[1-\alpha,1]$, and because  $\alpha\leq 2/3<1$, then
\[ p(0)=1-\alpha >0.\]
Now \eqref{ch3,eq6.6} yields
\[f'(z)=(p(z)+\alpha)^{2}\]
so that
\begin{equation}\label{ch3,eq6.7}
\frac{zf''(z)}{f'(z)}+1=\frac{2zp'(z)}{p(z)+\alpha}+1=\psi(p(z),zp'(z)),
\end{equation}
where
\[\psi(r,s)=\frac{2s}{r+\alpha}+1.\]

The function $\psi$ is continuous in the domain
$D=(\mathbb{C}\setminus\{-\alpha\})\times \mathbb{C}$,
 $(1-\alpha,0) \in D$ and $\RE \psi(1-\alpha,0)=1 >0.$
 To verify  the $\beta$-admissibility condition \eqref{ch3,eq4.4} is satisfied, consider
  \[\psi(i\rho,\sigma)=\frac{2\sigma}{\alpha^{2}+\rho^{2}}(\alpha-i\rho)+1.\]
Then
  \begin{align*}
  \RE \psi(i\rho,\sigma)&=\frac{2\alpha \sigma}{\alpha^{2}+\rho^{2}}+1\\
                        & \leq -\frac{\alpha}{\alpha^{2}+\rho^{2}}\left(1+\frac{2(1-\alpha)-|b|}{2(1-\alpha)+|b|}\right)\frac{(1-\alpha)^{2}+\rho^{2}}{1-\alpha}+1\\
                        &=-\frac{4\alpha}{2(1-\alpha)+|b|}\frac{(1-\alpha)^{2}+\rho^{2}}{\alpha^{2}+\rho^{2}}+1.
  \end{align*}
Using \eqref{ch3,eq6.5} and the monotonicity of the function
  \[h(t)=\frac{(1-\alpha)^{2}+t}{\alpha^{2}+t},\quad t \geq0,\]
it follows that
  \begin{align*}
     \RE \psi(i\rho,\sigma)&\leq -\frac{4\alpha}{2(1-\alpha)+|b|}\frac{(1-\alpha)^{2}}{\alpha^{2}}+1\\
                           &=\frac{(10+|b|)\alpha-6\alpha^{2}-4}{\alpha[2(1-\alpha)+|b|]}=0,
  \end{align*}
  whenever $\rho \in \mathbb{R}$ and
  \[\sigma \leq -\frac{1}{2}\left(1+\frac{2\RE p(0)-|b|}{2\RE p(0)+|b|}\right)\frac{|p(0)-i\rho|^{2}}{\RE p(0)},\quad p(0)=1-\alpha.\]
Thus $\psi \in \Psi_{b}(1-\alpha)$.

The hypothesis and \eqref{ch3,eq6.7}  yield
\[\RE \psi(p(z),zp'(z)) >0 \quad ( z\in
 {\mathbb{D}}).\]
Therefore, we conclude from Theorem \ref{ch3,th4.8}$(ii)$  that $p$
satisfies
\[\RE p(z) >0 \quad   \quad ( z \in \mathbb{D}).\]
This is equivalent to
\[\RE \sqrt{f'(z)}>\frac{10+|b|-\sqrt{|b|^{2}+20|b|+4}}{12}, \quad b=a_2. \qedhere\]
\end{proof}

\begin{remark}\label{ch3,rem6.4} The roots of \eqref{ch3,eq6.4} are
decreasing as a function of $|a_2|$, $0<|a_2|\leq1$. If $|a_2|=1$,
then $\alpha$ given by \eqref{ch3,eq6.4} reduces to $1/2$.
Therefore, Theorem \ref{ch3,th6.3} improves Theorem 2.6a in
\cite[p.\ 57]{monograph}.
\end{remark}

\begin{theorem}\label{ch3,th6.5}
If $f =z+a_2z^2+\ldots\in  \mathcal{S}^{*}$, then
\[ \RE \sqrt{\frac{f(z)}{z}}>\alpha,\]
where $\alpha$ is given by
\begin{equation}\label{ch3,eq6.8}
\alpha=\frac{20+|a_2|-\sqrt{16+|a_2|^{2}+40|a_2|}}{24}.
\end{equation}
\end{theorem}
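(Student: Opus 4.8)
The plan is to follow the template of Theorem~\ref{ch3,th6.3} almost verbatim, the only real change being an arithmetic one forced by the square root. Write $b:=a_2$. First I would record that the value $\alpha$ in \eqref{ch3,eq6.8} is the smaller root of
\[12\alpha^{2}-(20+|b|)\alpha+8=0,\]
obtained by isolating the radical in \eqref{ch3,eq6.8} and squaring; its discriminant $(20+|b|)^{2}-384=|b|^{2}+40|b|+16$ is precisely the quantity under the root sign in \eqref{ch3,eq6.8}. A sign check then locates $\alpha$: the quadratic takes the value $1-|b|/2$ at $1/2$ and the value $-2|b|/3$ at $2/3$, so since starlikeness of $f$ forces $|b|\le 2$, the smaller root lies in $[1/2,2/3]$. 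In particular $1-\alpha\in[1/3,1/2]$, so $\RE(1-\alpha)>0$.

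Next define $p:\mathbb{D}\to\mathbb{C}$ by $p(z):=\sqrt{f(z)/z}-\alpha$. Because $f\in\mathcal{A}_{b}$ is starlike, $f(z)/z=1+bz+\cdots$ is analytic and non-vanishing on $\mathbb{D}$, hence admits an analytic square root with $\sqrt{f(z)/z}=1+\tfrac{b}{2}z+\cdots$, so that $p(z)=(1-\alpha)+\tfrac{b}{2}z+\cdots$. The crucial bookkeeping point is that taking the square root \emph{halves} the fixed second coefficient, so $p\in\mathcal{H}_{|b|/2}[1-\alpha,1]$. From $f(z)=z\bigl(p(z)+\alpha\bigr)^{2}$, differentiation gives
\[\frac{zf'(z)}{f(z)}=1+\frac{2zp'(z)}{p(z)+\alpha}=\psi\bigl(p(z),zp'(z)\bigr),\qquad \psi(r,s)=1+\frac{2s}{r+\alpha},\]
which is the very same $\psi$ used in Theorem~\ref{ch3,th6.3}; moreover $(p(z),zp'(z))$ remains in its domain $D=(\mathbb{C}\setminus\{-\alpha\})\times\mathbb{C}$ because $(p(z)+\alpha)^{2}=f(z)/z\neq 0$, while $(1-\alpha,0)\in D$ and $\RE\psi(1-\alpha,0)=1>0$.

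The substantive step is verifying the $\beta$-admissibility condition \eqref{ch3,eq4.4} for the class $\Psi_{|b|/2}(1-\alpha)$; the requirement $\beta\le 2\RE a$, i.e.\ $|b|/2\le 2(1-\alpha)$, follows directly from the relation between $\alpha$ and $|b|$ together with $\alpha\in[1/2,2/3]$. Writing $r=i\rho$ gives $\RE\psi(i\rho,\sigma)=1+2\alpha\sigma/(\alpha^{2}+\rho^{2})$; inserting the admissible upper bound for $\sigma$ (with $a=1-\alpha$, $n=1$, $\beta=|b|/2$) and using $1+\dfrac{2(1-\alpha)-|b|/2}{2(1-\alpha)+|b|/2}=\dfrac{8(1-\alpha)}{4(1-\alpha)+|b|}$, the estimate becomes
\[\RE\psi(i\rho,\sigma)\le 1-\frac{8\alpha}{4(1-\alpha)+|b|}\cdot\frac{(1-\alpha)^{2}+\rho^{2}}{\alpha^{2}+\rho^{2}}.\]
Since $\alpha\ge 1/2$, the map $t\mapsto\dfrac{(1-\alpha)^{2}+t}{\alpha^{2}+t}$ is non-decreasing on $[0,\infty)$, so the last quotient is at least $(1-\alpha)^{2}/\alpha^{2}$, and clearing denominators turns the right-hand side into $\dfrac{-(12\alpha^{2}-(20+|b|)\alpha+8)}{\alpha\,[4(1-\alpha)+|b|]}=0$ by the defining equation of $\alpha$. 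Hence $\psi\in\Psi_{|b|/2}(1-\alpha)$. Finally, $f$ starlike gives $\RE\psi(p(z),zp'(z))=\RE\dfrac{zf'(z)}{f(z)}>0$ on $\mathbb{D}$, so Theorem~\ref{ch3,th4.8}$(ii)$ yields $\RE p(z)>0$, that is, $\RE\sqrt{f(z)/z}>\alpha$.

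The main obstacle is purely a matter of accounting rather than of new ideas: one must not forget that a square root halves the second coefficient (so the correct fixed-coefficient space is $\mathcal{H}_{|b|/2}$ and the admissibility must be run with $\beta=|b|/2$), and one must check that the admissibility inequality collapses exactly onto the quadratic $12\alpha^{2}-(20+|b|)\alpha+8=0$ — it is precisely this collapse that forces the particular form of $\alpha$ in \eqref{ch3,eq6.8}. The degenerate case $a_{2}=0$ (where $\beta=0$ and $\alpha=2/3$) would be handled separately through the inclusion $\Psi_{1,0}\equiv\Psi_{2}$, as is done elsewhere in this section.
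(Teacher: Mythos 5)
Your proof is correct, but it is not the route the paper takes: the paper disposes of Theorem \ref{ch3,th6.5} in one line via the Alexander relation. Given $f\in\mathcal{S}^{*}$ with second coefficient $a_{2}$, the function $g(z)=\int_{0}^{z}f(t)t^{-1}\,dt=z+\tfrac{a_{2}}{2}z^{2}+\cdots$ is convex and satisfies $g'(z)=f(z)/z$, so Theorem \ref{ch3,th6.3} applied to $g$ (whose second coefficient is $a_{2}/2$) gives $\RE\sqrt{f(z)/z}>\frac{10+|a_{2}|/2-\sqrt{|a_{2}|^{2}/4+10|a_{2}|+4}}{12}$, which is exactly \eqref{ch3,eq6.8}; the halving of the fixed coefficient enters there through the integration rather than through the square root. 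You instead rerun the admissibility machinery directly on $p(z)=\sqrt{f(z)/z}-\alpha$ with $\beta=|a_{2}|/2$ and the quadratic $12\alpha^{2}-(20+|a_{2}|)\alpha+8=0$; your computations (the halved coefficient, the identity $zf'(z)/f(z)=1+2zp'(z)/(p(z)+\alpha)$, the collapse of the admissibility estimate onto the quadratic, and the location of $\alpha$ in $[1/2,2/3]$ using $|a_{2}|\le 2$) are all correct and in effect re-prove Theorem \ref{ch3,th6.3} with $|b|$ replaced by $|b|/2$. The paper's route buys brevity and automatic inheritance of everything already verified for Theorem \ref{ch3,th6.3}; your route buys a self-contained argument, at the cost of rechecking details such as $\beta=|a_{2}|/2\le 2(1-\alpha)$ --- which, contrary to your remark that it follows ``directly'' from $\alpha\in[1/2,2/3]$, genuinely needs the quadratic relation (for instance, the quadratic is $|a_{2}|(|a_{2}|-2)\le 0$ at $1-|a_{2}|/4$, so $\alpha\le 1-|a_{2}|/4$), since $\alpha\le 2/3$ alone does not suffice when $|a_{2}|$ is close to $2$.
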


\begin{proof} The result follows from Theorem~\ref{ch3,th6.3} and the fact that
the classes of convex and starlike functions functions are related
by the Alexander relation $f \in \mathcal{C}$ if and only if $zf' \in  \mathcal{S}^{*}$.%
\end{proof}

\begin{remark}\label{ch3,rem6.6} If $|a_2|=2$ then $\alpha$ given by
\eqref{ch3,eq6.8} reduces to $1/2$. Therefore, Theorem
\ref{ch3,th6.5} improves Theorem 2.6e in \cite[p.\ 62]{monograph}.\\
\end{remark}

\noindent \textbf{Acknowledgement.} The work presented here was
supported in parts by FRGS and RU grants from Universiti Sains
Malaysia. The authors are thankful to the referees for their useful
comments.

\end{document}